\def\supp{\operatorname{supp}}
\def\a{\alpha}
\def\e{\varepsilon}
\newtheorem{exam}{Example}[section]
\newtheorem{thm}{Theorem}[section]
\newtheorem{df}{Definition}[section]
\newtheorem{lem}{Lemma}[section]
\newtheorem{prop}{Proposition}[section]
\newtheorem{rem}{Remark}[section]
\numberwithin{equation}{section}
\newtheorem{rema}{Remark A.$\!\!$}
\newtheorem{lema}{Lemma A.$\!\!$}
\newtheorem{propa}{Proposition A.$\!\!$}
\newtheorem{dfa}{Definition A.$\!\!$}
\newcommand{\dstl}{\displaystyle}
\newcommand{\tstl}{\textstyle}
\newcommand{\wt}[1]{\widetilde{#1}}
\newcommand{\wbl}[1]{\:\!\overline{\;\!\!{#1}}}
\newcommand{\wbm}[1]{\;\!\overline{\:\!\!{#1}}}
\newcommand{\wbs}[1]{\,\overline{\!{#1}}}
\newcommand{\wbml}[1]{\;\!\overline{\:\!\!{#1}\;\!\!}}
\newcommand{\wbsm}[1]{\,\overline{\!{#1}\:\!\!}}
\newcommand{\ubm}[1]{\underline{{#1}\;\!\!}\;\!}
\newcommand{\ubs}[1]{\underline{{#1}\;\!\!\!}\;}
\newcommand{\ubb}[1]{\underline{\:\!{#1}\;\!\!\!}\,}
\newcommand{\ch}[1]{\chi\kern-.05em\lower1ex\hbox{$\scriptstyle{#1}$}}
\newcommand{\chb}[1]
{\wbml{\chi}\kern-.02em\lower1ex\hbox{$\scriptstyle{#1}$}\;\!}
\newcommand{\vLambda}{{\mit\Lambda}}
\newcommand{\vPhi}{{\mit\Phi}}
\newcommand{\vPsi}{{\mit\Psi}}
\newcommand{\vOmega}{{\mit\Omega}}
\newcommand{\emp}{\;\!{\rm o}\!\!\!/\;\!}
\newcommand{\nequiv}{\,\,{\equiv}\!\!\!\! /\,\,\,}
\newcommand{\vphi}{\varphi}
\newcommand{\bs}{\:\!\!\setminus\:\!\!}
\newcommand{\Lap}{{\mit{\Delta}}}
\newcommand{\R}{{\bf{R}}}
\newcommand{\N}{{\bf{N}}}
\newcommand{\BC}{{B\;\!\!C}}
\newcommand{\Rn}{{\bf R}\kern-0.08em\lower-0.75ex\hbox{$\scriptscriptstyle n$}}
\newcommand{\Rd}[1]{{\bf R}\kern-0.1em\lower-0.75ex\hbox{$\scriptscriptstyle{#1}$}}
\newcommand{\Sd}{S\kern0.0em\lower-0.75ex\hbox{$\scriptscriptstyle d\;\!\!-\:\!\!1$}}
\newcommand{\Sn}{S\kern0.0em\lower-0.75ex\hbox{$\scriptscriptstyle n\;\!\!-\:\!\!1$}}
\newcommand{\Cn}{{\bf C}\kern-0.01em\lower-0.75ex\hbox{$\scriptscriptstyle n$}}
\newcommand{\el}[1]{{\ell}\kern0.02em\lower-0.75ex\hbox{$\scriptscriptstyle{#1}$}}
\newcommand{\EL}[1]{{L}\kern0.0em\lower-0.75ex\hbox{$\scriptscriptstyle{#1}$}}
\newcommand{\EA}[1]{{A}\kern0.0em\lower-0.75ex\hbox{$\scriptscriptstyle{#1}$}}
\newcommand{\Tst}{T\kern0.0em\lower-0.58ex\hbox{$\scriptscriptstyle *$}}
\newcommand{\wtM}[1]{{\:\wt{\;\!\!\!M\;\!\!}}
\kern0.1em\lower-0.9ex\hbox{$\scriptstyle{#1}$}}
\newcommand{\wbM}[1]{{\;\!\wbs{\:\!\!M\:\!\!}}
\kern0.14em\lower-0.9ex\hbox{$\scriptstyle{#1}$}}
\newcommand{\wbY}{{\;\!\wbm{Y\:\!\!}}}
\newcommand{\tle}{\;\!\tilde{\:\!\!\e\;\!\!}\:\!}
\title{ On general  Caffarelli-Kohn-Nirenberg type inequalities involving non-doubling
weights }
\author{   Toshio Horiuchi}
\begin{document}
\maketitle

\begin{abstract}
By using $W(\R_+)= P(\R_+) \cup Q(\R_+)$ as a  class of weight functions, we will establish 
the Caffarelli-Kohn-Nirenberg type inequalities  with  non-doubling weights being  permitted.  
The classical  Caffarelli-Kohn-Nirenberg type inequalities are categorized  into   non-critical  and  critical cases,
  and it  is known  that   there  is some  kind of    mysterious relationship between them.  
Interestingly the  new framework in this treatise allows them to be integrated and reveals the  meaning of  mysterious relationships.

\end{abstract}

\section{Introduction }The  main purpose of the present paper is 
to study the Caffarelli-Kohn-Nirenberg type inequalities, which are abbreviated as  the CKN-type inequalities.
We will  establish the   CKN-type inequalities  with  non-doubling weights.
For this purpose we introduce a class of weight functions denoted by $W(\mathbf R_+)$  $(\mathbf R_+=(0,\infty))$, that is 
$$W(\mathbf R_+)= \{ w\in C^1({\R}_+): w>0, \lim_{t\to+0}w(t)=a\, \text{  for  some }\,a\in [0,\infty] \}.$$  
Further  we define two subclasses of $W(\R_+)$, that  is 
 \begin{equation}  \begin{cases}&P(\mathbf R_+)= \{ w(t)\in W(\mathbf R_+) : \,  w(t)^{-1} \notin L^1((0,\eta)) \, \text{ for some} \, \eta >0\},\\
 &Q(\mathbf R_+) =\{ w(t)\in W(\mathbf R_+) :  \, w(t)^{-1}\in L^1((0,\eta)) \, \text{ for any } \, \eta >0 \}.
 \end{cases}
 \end{equation}
Clearly   $W(\mathbf R_+)=P(\mathbf R_+) \cup Q(\mathbf R_+)$ and $ P(\mathbf R_+) \cap Q(\mathbf R_+)=\phi$ hold. 
 See Section 2 for  the precise definition of  theses classes.
A positive continuous function $w(t)$ on $(0,\infty)$ is  said to be a  doubling weight  if  there exists a positive number $C$  such  that  we  have 
$C^{-1} w(t)\le w(2t)\le C w(t) \, (0<t<\infty)$,
where $C$ is independent of  each $t \in (0,\infty)$.
If $w(t)$ does not possess this property, then $w(t)$ is  said to be  a non-doubling weight,
 and typically 
$e^{-1/t} \in  P(\mathbf R_+)$  and  $e^{1/t} \in Q(\mathbf R_+)$ are non-doubling weights. It  will be   seen  that
our   results on  the CKN-type  inequalities    essentially depend on whether $w$ belongs to
 $ P(\mathbf R_+) $ or $ Q(\mathbf R_+)$.
\par\medskip
The classical  CKN-type inequalities are categorized  into 
 the non-critical inequalities  (\ref{1.4}) and the critical inequalities  (\ref{CKNc}), and  there is some kind of   mysterious relationship between them. For  the details see  (\ref{relations}) in Remark \ref{remark2.2} and Proposition \ref{relation} in Section 7.
By using a new   frame-work,   we will show that they can be treated in a unified manner,  and 
as a result  we  will  make clear the meaning of the   relationship. 
Let us  explain the situation in a little more detail.
We shall establish the  following CKN-type inequalities (\ref{mistake}) with non-doubling weights  that contain the classical   CKN-type inequalities. 
    \par\medskip
Let  $w\in W(\mathbf R_+)$, 
 $1<p\le q <\infty$, $\eta>0$, $\mu>0$ and  $ 0 \le 1/p -1/q \le1/n$.
Then, there exists a positive number $C_n =C_n(p,q,\eta,\mu, w)$ such  that  we have  
\begin{equation}
\int_{B_\eta} |\nabla u|^p w(|x|)^{p-1}|x|^{1-n}\,dx\ge  C_n \left(  \int_{B_\eta} 
\frac{ |u|^q |x|^{1-n}\,dx}{ w(|x|)f_\eta (|x|)^{1+q/{p'}}} \right)^{p/q}, \quad u\in C_c^\infty(B_\eta\setminus \{0\}),\label{mistake}
\end{equation} 
where $p'= p/(p-1)$, $B_\eta$ is  the  ball $ \{ x\in \R^n : \: |x|< \eta\}$   and 
\begin{equation}f_\eta(t)= \begin{cases} & \mu+ \int_t^\eta \frac{1}{w(s)}\,ds,\qquad 
w\in P(\R_+),\\
&\int_0^t \frac{1}{w(s)}\,ds, \qquad \qquad
w\in Q(\R_+).
\end{cases}
\end{equation}
The inequalities  (\ref{mistake}) clearly involve  non-doubling weights. We  remark  that  when  $p=q$,   they are reduced to the  Hardy-type inequality and have been  established in \cite{ho3}; Theorem 3.1. (cf. \cite{ho4})\par
For now, let's admit the inequalities (\ref{mistake}) and show how they unify non-critical and critical CKN-type inequalities.
For this  purpose,
we  set   $w(t)= t^{p'\gamma+ 1}, \gamma\in \R $.
By  the  definitions of $P(\R_+)$, $Q(\R_+)$  and $w(t)$,  we  immediately 
 see that $w(t)\in P(\R_+)$ if  $\gamma\ge  0$, and $w(t)\in Q(\R_+)$ if  $\gamma< 0$. Since $w(|x|)^{p-1} |x|^{1-n}= |x|^{p(1+\gamma)-n}$ holds,  we are  able  to  check   that

\begin{equation}
f_\eta (t) =\begin{cases} &\frac1{p'\gamma}t^{-p'\gamma}, \quad\qquad \mbox{ if } \gamma> 0  \mbox{ and } \mu=  \frac1{p'\gamma}\eta^{-p'\gamma},\\
\\
& \mu+  \log \eta /t, \quad\quad \mbox{ if } \gamma= 0,\\
\\
 & -\frac1{p'\gamma}t^{-p'\gamma},\qquad\; \mbox{ if } \gamma< 0,
\end{cases}
\end{equation}
and we  have
\begin{equation}
\frac{|x|^{1-n}}{ w(|x|)  f_\eta (|x|)^{1+q/{p'}}}= \begin{cases}
&(p'\gamma)^{1+q/p'} |x|^{q\gamma-n}, \qquad \mbox{ if } \gamma> 0  \mbox{ and } \mu=  \frac1{p'\gamma}\eta^{-p'\gamma},\\
\\
&\frac{1}{|x|^n (\log (R\eta/|x|))^{1+q/p'}}, \qquad\qquad\mbox{ if } \gamma= 0  \mbox{ and } R=  e^{\mu}, \\
\\
 & (-p'\gamma)^{1+q/p'} |x|^{q\gamma-n}, \quad\; \mbox{ if } \gamma< 0.   

\end{cases}\label{1.13}
\end{equation}

\par\noindent 
If   $\gamma\neq 0$, then  the inequalities 
 (\ref{mistake}) coincide with  the non-critical CKN-type  inequalities  (\ref{1.4}) with ${\R}^n $ replaced by $B_\eta$, and if $\gamma=0$, then 
they coincide with  the critical CKN-type  inequality (\ref{CKNc}).
Therefore,
 it follows from  (\ref{1.4}) and  (\ref{CKNc}), together with the remarks just after (\ref{symmetric}) and (\ref{symmetric2}), that  the  desired inequalities (\ref{mistake})
hold, provided that    $C_n$ satisfies $C_n(p'|\gamma|)^{p(1/q+1/p')} \le S^{p,q;\gamma}$ if $\gamma\neq 0$;  $C_n\le C^{p,q;R}$ if $\gamma=0$. 
Thus  we  see that the  inequalities (\ref{mistake}) unify the non-critical  and the critical CKN-type  inequalities.
\par\medskip
{ Here   we  clearly  state  that} the  inequalities (\ref{mistake})  do  not  hold for  $w\in W(\R_+)$ unconditionally, unless $n=1$ or $p=q$ (the Hardy-type). 
In order to study the validity of (\ref{mistake}) with each $w(t)\in W(\R_+)$
we  introduce  {\bf  the  non-degenerate condition (\ref{H-condition})} in  Section 3 concerning  the behavior of $w(t)$ near $t=0$, 
and   in Theorem \ref{th4.2}  we   establish the validity of  (\ref{mistake})  under  (\ref{H-condition}).
Roughly speaking,  (\ref{H-condition}) assures that  $w(t)$ does  not behave so  badly as $t\to +0$, and hence the function 
 $H(\rho)$, given  by Definition (\ref{Definition3.2}),  is bounded away from $0$. On  the contrary   if $\dstl\lim_{\rho\to +0}H(\rho)=0$ is  assumed, then   we   show  in Theorem \ref{Thm3.3} that  the inequalities (\ref{mistake}) fail to  hold,  and in  Theorem \ref{Thm3.4} we   characterize a  set of  weight functions 
for  which (\ref{H-condition}) is violated. 
\par\medskip

In \cite{CKN} they established general multiplicative inequalities with weights being  powers of distance from the origin. 
The inequalities they presented include the classical Hardy-Sobolev inequalities (cf.\cite{ho1, m,t1,t2}), which later became known as the CKN-type inequalities, are still the subject of many interesting studies. 
In the study  of the CKN-type inequalities, the presence of weight functions in the both sides 
prevents us from employing effectively the so-called spherically symmetric rearrangement.
Further the invariance of $\R^n $ by the group of dilatations creates some possible loss of compactness. 
 Partly because of these difficulties, it was a very interesting subject of research.
In \cite{ho2,CH,CH2,hk3} we have  also studied
these inequalities intensively to show that the existence of extremals, the values of best constants  and
their asymptotic behaviors  essentially depend upon the relations among  parameters in the inequalities.\par
Recently,
there are  authors who have studied classical inequalities such  as 
Hardy-Sobolev type  and CKN-type  employing  various transformations to obtain fruitful results (cf. \cite{Ioku,s,s2,s3, Z}). 
We have  also revisited  weighted Hardy's inequalities as follows:
In \cite{ah3,  ahl}  we improved them under one-sided boundary conditions, and 
in \cite{ah4,ho3,ho4} we introduced  a new frame-work  with a class of  weight functions $W(\R_+)$. This work is based on  them.
\par\medskip
This paper is organized in the following way: 
The proof  
Theorem \ref{th4.2} is given in Section {4}. 
Theorem \ref{Thm3.3} and Theorem \ref{Thm3.4} are established  in  Section 5 and in Section 6 respectively. 
  In Appendix we collect useful  relations among the best constants of the CKN-type  inequalities for  the  sake of 
self-containedness. 

\par\medskip 

 \section{Preliminaries  }
 \subsection{A review of the classical CKN-type inequalities}
 We  begin with  reviewing  the CKN-type inequalities as a background for this paper.
We  describe fundamental facts on the CKN-type inequalities  according to \cite{hk3}, and
 classify 
 the CKN-type inequalities into  two cases, according to  the  range of the parameter  $\gamma\in \R$. Namely
 \begin{df}\label{radialspace}
  The parameter $\gamma$ is said to be
critical  and non-critical  if  $\gamma$ satisfies $\gamma=0$  and  $\gamma \neq 0$  respectively.
 \end{df}
\begin{rem}
The non-critical case ($\gamma\neq 0$) was further  classified  in \cite{hk3}.
\end{rem}
\par
In the  non-critical case, the CKN-type  inequalities have the following form:
\begin{equation}\int_{\R^n}|\nabla u(x)|^p |x|^{p(1+\gamma )-n}\, dx\ge
 S^{p,q;\gamma}\bigg(\int_{\R^n}|u(x)|^q |x|^{\gamma q-n}\, dx\bigg)^{p/q}, \; u \in 
C_{\rm c}^{\infty}(\R^n\setminus \{0\}), 
\label{1.4}
\end{equation}
where  $\gamma \neq 0$,  $n \ge 1$, $1<p\le q <\infty$ and  $ 0 \le 1/p -1/q \le1/n$.
Here $S^{p,q;\gamma}= S^{p,q;\gamma}(\R^n) $ is  called the  best constant and given by the following variational problem:
\begin{align}
 S^{\:\!p\;\!\!,q\:\!;\:\!\gamma}
 &=\inf\{{\;\!}E^{\:\!p\;\!\!,q\:\!;\:\!\gamma}[u] : u\in
 C^{\:\!\infty}_{\rm c}(\R^{\;\!\!n}\bs{\;\!\!}\{0\}{\:\!\!}){\;\!\!}\bs{\;\!\!}\{0\}\}, \label{best}
\end{align}
where
\begin{equation}
 E^{\:\!p\;\!\!,q\:\!;\:\!\gamma}[u]
 =\dfrac{\int_{\R^n}|\nabla u(x)|^p |x|^{p(1+\gamma )-n}\, dx}
 {\bigg(\int_{\R^n}|u(x)|^q |x|^{\gamma q-n}\, dx\bigg)^{p/q}}{\qquad}\mbox{ for }u\in
 C^{\:\!\infty}_{\rm c}(\R^{\;\!\!n}\bs{\;\!\!}\{0\}{\:\!\!})\setminus \{0\}.
\end{equation}
%
 We  also define the  radial best constant  $S^{\:\!p\;\!\!,q\:\!;\:\!\gamma}_{\rm rad}=S^{\:\!p\;\!\!,q\:\!;\:\!\gamma}_{\rm rad}(\R^n)$ as follows.
 \begin{df}\label{df2.2}
Let 
$\vOmega$ be  a radially symmetric domain. 
For any function space  $V(\vOmega)$ on 
$\vOmega$,   we set
\begin{equation}\dstl{
 V(\vOmega)_{\rm rad}^{}=\{{\;\!}u\in V(\vOmega) : \mbox{$u$ is radial}{\;\!}\}.
}\end{equation}
\end{df}
 Then we define
\begin{equation}
S^{\:\!p\;\!\!,q\:\!;\:\!\gamma}_{\rm rad}
 =\inf\{{\;\!}E^{\:\!p\;\!\!,q\:\!;\:\!\gamma}[u] :  u
 \in C^{\:\!\infty}_{\rm c}(\R^{\;\!\!n}\bs{\;\!\!}\{0\}{\:\!\!})_{\rm rad}^{}{\!}
 \bs{\;\!\!}\{0\}\}. \label{symmetric}
\end{equation}
It follows  from Proposition \ref{classicalCKN} that
$\dstl{
 S^{\:\!p\;\!\!,q\:\!;\:\!\gamma}=S^{\:\!p\;\!\!,q\:\!;\:\!-\:\!\gamma} \text {and } S_{\rm{rad}}^{\:\!p\;\!\!,q\:\!;\:\!\gamma}=S_{\rm{rad}}^{\:\!p\;\!\!,q\:\!;\:\!-\:\!\gamma}.
}$
By $S^{\:\!p\;\!\!,q\:\!;\:\!\gamma}(\Omega)$  we  denote  the  best constant  with $\R^n$ replaced by $\vOmega$.
Here we remark  that
the  best  constants $S^{\:\!p\;\!\!,q\:\!;\:\!\gamma}\;(=S^{p,q;\gamma}(\R^n)) $ is invariant if  $\R^n$ is  replaced by an  arbitrary  domain $\vOmega$ containing the origin.  $ S^{\:\!p\;\!\!,q\:\!;\:\!\gamma}_{\rm rad}\;(=S^{\:\!p\;\!\!,q\:\!;\:\!\gamma}_{\rm rad}(\R^n)) $ is  also invariant  if $\R^n$ is  replaced by a  radially symmetric domain $\vOmega$ containing the origin.
Further $S^{p,q;\gamma} $ and $ S^{\:\!p\;\!\!,q\:\!;\:\!\gamma}_{\rm rad} $ are not attained by a function having compact support. 
(cf. \cite{ho2,po}) 
%

\par\medskip\noindent In the critical case that $\gamma=0$, the CKN-type inequalities have the following form: For $\eta>0$,
let $B_\eta$ be  the  ball $ \{ x\in \R^n : \: |x|< \eta\}$.   
\begin{equation}\label{CKNc}\dstl{
 {\int_{\;\!\!B_{\;\!\!\eta}^{}}\;\!\!}|\nabla u(x)|^{p}|x|^{p-n}{\:\!}dx
 \ge C^{p,q;R}\Big({\int_{\:\!\!B_{\;\!\!\eta}^{}}\;\!\!}
\dfrac{ |u(x)|^{q}}{|x|^n \left(\log(R\eta/|x|)\right)^{1+\:\!q/\;\!\!p'}\;\!\!}{\:\!}dx
 \Big)^{\:\!\!p/\;\!\!q}\,
},\quad  u \in 
C_{\rm c}^{\infty}(B_\eta\setminus \{0\}),\end{equation}
where $R$ is a positive  number satisfying $ R> 1$  and the best constant $C^{p,q;R}=C^{p,q;R}(B_\eta)$ is given by the  variational problrm:
\begin{align}
 C^{\:\!p\;\!\!,q\:\!;\:\!R}
 &=\inf\{{\;\!}F^{\:\!p\;\!\!,q\:\!;\:\!R}[u] : u\in
 C^{\:\!\infty}_{\rm c}(B_\eta\bs{\;\!\!}\{0\}{\:\!\!}){\;\!\!}\bs{\;\!\!}\{0\}\}, 
\end{align}
where
\begin{equation}
 F^{\:\!p\;\!\!,q\:\!;\:\!R}[u]
 =\dfrac{\int_{B_\eta}|\nabla u(x)|^p |x|^{p-n}\, dx}
 {\Big({\int_{\:\!\!B_{\;\!\!\eta}^{}}\;\!\!}
{ |u(x)|^{q}}{|x|^{-n }\left(\log(R\eta/|x|)\right)^{-1-\:\!q/\;\!\!p'}\;\!\!}{\:\!}dx
 \Big)^{\:\!\!p/\;\!\!q}}{\,\,\,}\mbox{ for }u\in
 C^{\:\!\infty}_{\rm c}(B_\eta\bs{\;\!\!}\{0\}{\:\!\!})\setminus \{0\}.
\end{equation}
We also define the  radial best constant
\begin{align}
  C^{\:\!p\;\!\!,q\:\!;R}_{\rm rad}
 &
 =\inf\{{\;\!}F^{\:\!p\;\!\!,q\:\!;R}[u] : u
 \in C^{\:\!\infty}_{\rm c}(B_{\eta}^{}{\!}\bs{\;\!\!}\{0\}{\:\!\!})_{\rm rad}^{}{\!}
 \bs{\;\!\!}\{0\}\}.\label{symmetric2}
\end{align}
Here we remark  that
 the  functional $ F^{\:\!p\;\!\!,q\:\!;\:\!R}[u]$ itself  depends  on each $\eta>0$, nevertheless  the best constants $C^{\:\!p\;\!\!,q\:\!;\:\!R}$  and 
 $C^{\:\!p\;\!\!,q\:\!;R}_{\rm rad}$ are independent of $\eta>0$. To see this it  suffices  to employ a change  of  variables given by $x=\eta y$.
\begin{rem}\label{remark2.2} It  follows 
from 
Proposition \ref{classicalCKN} and Proposition \ref{classicalCKN2}
we have  interesting and mysterious relationships among the best constants  as follows:\medskip
\par\noindent
 If  $n\ge 2$, $1/p'\le \gamma_{p,q}$. $ R\ge R_{p,q}$ and $1/p-1/q\le 1/n$, then it holds  that  
\begin{equation}
S^{p,q;  1/p'}=S^{p,q;  1/p'}_{\rm rad}=
C^{\:\!p\;\!\!,q\:\!;R}_{\rm rad}
=
C^{\:\!p\;\!\!,q\:\!;R}. \label{relations}
\end{equation}
Here $\gamma_{p,q}$  and $R_{p,q}$ are positive numbers defined by (\ref{df2.4}) and (\ref{df2.6}) in Appendix.

\end{rem}
\medskip
\par
If  $n=1$,  then we  have the following  that is proved  for the sake of self-containedness.
\par\medskip
\begin{lem}\label{Lemma2.1}
 Assume  that $n=1$, $ 1<p\le q<\infty$ and  $\gamma\neq 0$.  Then we have  the followings:
\begin{enumerate}\item 
$S^{\:\!p\;\!\!,q\:\!;\:\!\gamma}(\R) =S^{\:\!p\;\!\!,q\:\!;\:\!\gamma}( (-\infty,0))=S^{\:\!p\;\!\!,q\:\!;\:\!\gamma}( (0,\infty))$.
\item
$S^{\:\!p\;\!\!,q\:\!;\:\!\gamma}_{\rm rad}(\R)= 2^{1-p/q}\;S^{\:\!p\;\!\!,q\:\!;\:\!\gamma}(\R)$.
\end{enumerate}
\end{lem}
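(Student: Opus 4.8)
The plan is to reduce both assertions to the one-dimensional half-line problem on $(0,\infty)$, using only the reflection symmetry $x\mapsto-x$ and the elementary inequality $a^{p/q}+b^{p/q}\ge(a+b)^{p/q}$ for $a,b\ge0$, which holds precisely because $p/q\le1$ (subadditivity of $t\mapsto t^{p/q}$). For $n=1$ the functional reads
\[
E^{p,q;\gamma}[u]=\frac{\int_{\R}|u'(x)|^p\,|x|^{p(1+\gamma)-1}\,dx}{\Big(\int_{\R}|u(x)|^q\,|x|^{\gamma q-1}\,dx\Big)^{p/q}},
\]
and both weights are even. Hence $u(x)\mapsto u(-x)$ identifies $C_{\rm c}^\infty((0,\infty))$ with $C_{\rm c}^\infty((-\infty,0))$ leaving $E^{p,q;\gamma}$ invariant, so $S^{p,q;\gamma}((-\infty,0))=S^{p,q;\gamma}((0,\infty))$; and extension by zero gives $C_{\rm c}^\infty((0,\infty))\subset C_{\rm c}^\infty(\R\bs\{0\})$, whence $S^{p,q;\gamma}(\R)\le S^{p,q;\gamma}((0,\infty))$.

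For the reverse inequality in part (1) I would take $u\in C_{\rm c}^\infty(\R\bs\{0\})\bs\{0\}$ and split $u=v+w$, where $v$ agrees with $u$ on $(0,\infty)$ and vanishes on $(-\infty,0]$, and $w=u-v$; this is legitimate since $\supp u$ is a compact subset of $\R\bs\{0\}$, so $v\in C_{\rm c}^\infty((0,\infty))$ and $w\in C_{\rm c}^\infty((-\infty,0))$. Writing $A,B$ for the weighted Dirichlet integrals and $a,b$ for the weighted $L^q$ integrals of $v$ and $w$, one has $A\ge S^{p,q;\gamma}((0,\infty))\,a^{p/q}$ and $B\ge S^{p,q;\gamma}((0,\infty))\,b^{p/q}$ (these are trivial if $v$ or $w$ vanishes); adding them and using subadditivity of $t\mapsto t^{p/q}$,
\[
\int_{\R}|u'|^p|x|^{p(1+\gamma)-1}\,dx=A+B\ge S^{p,q;\gamma}((0,\infty))(a^{p/q}+b^{p/q})\ge S^{p,q;\gamma}((0,\infty))(a+b)^{p/q}.
\]
Since $a+b=\int_{\R}|u|^q|x|^{\gamma q-1}\,dx$, this yields $E^{p,q;\gamma}[u]\ge S^{p,q;\gamma}((0,\infty))$, completing part (1).

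For part (2) I would use that $u\mapsto u|_{(0,\infty)}$ is a bijection from the even functions of $C_{\rm c}^\infty(\R\bs\{0\})$ onto $C_{\rm c}^\infty((0,\infty))$, the even extension of a function supported away from the origin being again smooth with compact support in $\R\bs\{0\}$; by evenness of the two integrands an even $u$ satisfies $\int_{\R}|u'|^p|x|^{p(1+\gamma)-1}\,dx=2A$ and $\int_{\R}|u|^q|x|^{\gamma q-1}\,dx=2a$, with $A,a$ the half-line quantities of $v=u|_{(0,\infty)}$, so $E^{p,q;\gamma}[u]=2^{1-p/q}A/a^{p/q}$. Taking the infimum over $v\in C_{\rm c}^\infty((0,\infty))\bs\{0\}$ gives $S^{p,q;\gamma}_{\rm rad}(\R)=2^{1-p/q}S^{p,q;\gamma}((0,\infty))=2^{1-p/q}S^{p,q;\gamma}(\R)$, the last equality by part (1). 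There is no genuine obstacle here; the only points deserving a word of justification are the legitimacy of the decomposition $u=v+w$ and of the even extension (both because $0\notin\supp u$), and the inequality $a^{p/q}+b^{p/q}\ge(a+b)^{p/q}$, which is exactly where the hypothesis $p\le q$ enters.
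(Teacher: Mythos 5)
Your proof is correct and follows essentially the same route as the paper: split at the origin, use reflection symmetry to identify the two half-line constants, and exploit $p\le q$ to recombine, with the radial case handled by the identical $2^{1/p}/2^{1/q}$ bookkeeping. The only cosmetic difference is that you invoke subadditivity of $t\mapsto t^{p/q}$ after applying the half-line inequality to each piece, whereas the paper uses the equivalent min-of-Rayleigh-quotients inequality $(1+t^p)^{1/p}\ge(1+t^q)^{1/q}$ before passing to the infimum.
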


\begin{proof}[Proof of Lemma \ref{Lemma2.1}]
For any $u\in C^\infty_c(\R\setminus\{0\})$, temporally we set 
$$ \|u\|_1(\vOmega) =\left(\int_{\vOmega}|u(t)|^p|x|^{p(1+\gamma)-1}\,dx \right)^{1/p}, \qquad 
\|u\|_2(\vOmega) =\left(\int_{\vOmega}|u(x)|^q |x|^{\gamma q-1}\, dx\right)^{1/q},$$
where $\vOmega= (-\infty,0)$ or $(0,\infty)$.
Since $1<p\le q<\infty$, we have $(1+t^p)^{1/p}\ge (1+t^q)^{1/q}$ for $t\ge 0$. Then we have
\begin{equation}\dstl{
 E^{p,q;\gamma}[u]^{1/p}
 \ge \min{\!}\left\{\:\!\!\dfrac{\|u'\|_1((-\infty,0))}
 {\|u\|_2((-\infty,0))},
 \dfrac{\|u'\|_1((0,\infty))}
 {\|u\|_2((0,\infty))}\right\}{\,\,\,}
 \mbox{ for }u\in C^{\:\!\infty}_{\rm c}(\R\bs {\;\!\!}\{0\}){\;\!\!}\bs{\;\!\!}\{0\}
}.\label{minmax}
\end{equation}
Thus we have 
$$ S^{p,q;\gamma}(\R)\ge \min( S^{p,q;\gamma}((-\infty,0)), S^{p,q;\gamma}((0,\infty)) ).$$
By  the symmetricity, we have $S^{p,q;\gamma}((-\infty,0))=S^{p,q;\gamma}((0,\infty)) $.
Since   the reverse inequality $S^{p,q;\gamma}(\R)\le S^{p,q;\gamma}((0,\infty)) $
is obvious  by  the definition (\ref{best}), the assertion (1) is proved.\par 
We proceed to the assertion (2). If  $u\in C^{\:\!\infty}_{\rm c}(\R\bs {\;\!\!}\{0\}){\;\!\!}\bs{\;\!\!}\{0\}$ is radial, then 
\begin{align*}&\|u'\|_1(\R)= 2^{1/p} \|u'\|_1((-\infty,0)) =2^{1/p} \|u'\|_1((0,\infty)),\\
& \|u\|_2(\R)=2^{1/q}\|u\|_2((-\infty,0)) =2^{1/q}\|u\|_2((0,\infty)). \end{align*}
Then   we have
$$ S^{\:\!p\;\!\!,q\:\!;\:\!\gamma}_{\rm rad}(\R)=  2^{1-p/q}\;S^{\:\!p\;\!\!,q\:\!;\:\!\gamma}((-\infty,0))
=2^{1-p/q}\;S^{\:\!p\;\!\!,q\:\!;\:\!\gamma}((0,\infty)).$$ Therefore the assertion (2) follows from the assertion (1).
\end{proof}


\subsection{A class of weight functions}
First we  introduce a class of weight functions which  is crucial in this paper.
\begin{df}\label{D0}
 Let us  set $\mathbf R_+= (0,\infty)$ and 
  \begin{equation}W(\mathbf R_+)=
\{ w\in C^1({\R}_+): w>0, \lim_{t\to+0}w(t)=a\, \text{  for  some }\,a\in [0,\infty] \}.\end{equation}

\end{df}
In  the  next  we define two subclasses of  this   large class.

\begin{df}\label{D1} Let  us  set 
 \begin{align} &P(\mathbf R_+)= \{ w\in W(\mathbf R_+) : \,  w^{-1} \notin L^1((0,\eta)) \, \text{ for some } \, \eta >0\}.\\
&
 Q(\mathbf R_+) =\{ w\in W(\mathbf R_+) :  \, w^{-1}\in L^1((0,\eta)) \, \text{ for any } \, \eta >0 \}.\end{align}
\end{df}

\begin{exam} 
\begin{enumerate}
\item 
$t^{\a } \in P(\mathbf R_+)$ if $\a \ge 1$  and 
$t^{\a} \in Q(\mathbf R_+)$ if $\a < 1$.
\item $e^{-1/t} \in  P(\mathbf R_+)$  and  $e^{1/t} \in Q(\mathbf R_+)$.

 \item For $ \a\in \mathbf R$, $ t^\a e^{-1/t}\in P(\mathbf R_+)$ and $t^\a e^{1/t}\in Q(\mathbf R_+)$.
\end{enumerate}
\end{exam}

\begin{rem} \begin{enumerate}
\item From  Definition \ref{D0} and Definition \ref{D1} it  follows   that $W(\mathbf R_+)= P(\mathbf R_+)\cup Q(\mathbf R_+)$ and  $P(\mathbf R_+)\cap Q(\mathbf R_+)=\phi.$
\item If  $w^{-1} \notin L^1((0,\eta))  \, \text{ for some } \, \eta >0$, then  $w^{-1} \notin L^1((0,\eta)) \, \text{ for any } \, \eta >0$. Similarly
if $w^{-1} \in L^1((0,\eta))  \, \text{ for some } \, \eta >0$, then  $w^{-1} \in L^1((0,\eta)) \, \text{ for any } \, \eta >0$.
\item
If  $w\in  P(\mathbf R_+)$, then $\dstl\lim_{t\to +0}w(t)=0$. Hence 
by  setting $w(0)=0$, $w$ is uniquely  extended to  a continuous function on $[0,\infty)$. 
On the other hand  if $w\in  Q(\mathbf R_+)$, then    possibly  $\dstl\lim_{t\to +0}w(t)=+\infty$.
\end{enumerate}
\end{rem}
\par\medskip
Lastly 
 we define  a function $f_{\eta}(t)$ in order to introduce  variants of the Hardy potentials:
\par\medskip
\begin{df}
\begin{equation}f_\eta(t)= \begin{cases} & \mu+ \int_t^\eta \frac{1}{w(s)}\,ds,\qquad  \text{ if }\quad
w\in P(\R_+),\\
&\int_0^t \frac{1}{w(s)}\,ds, \qquad\qquad \text{ if }\quad
w\in Q(\R_+).
\end{cases}\label{feta}
\end{equation}

\end{df}

\section{Main results }

\subsection{The $n$-dimensional CKN-type inequalities}

\par\bigskip
 Let  $B_\eta$ be  the ball $  \{x\in \R^n:  |x| <\eta\}$.
Define a strictly monotone    function $\varphi(\rho)\in C^1(0,\varphi^{-1} ({\eta}))$ as  follows:\par

\begin{df}\label{Definition3.1}
\begin{enumerate}\item
For  $w\in P(\R_+)$,  by $\varphi(\rho)$ we  denote the unique solution of  the integral equation

\begin{equation}
\rho^{-1}= \mu+ \int^\eta_{\varphi(\rho)}\frac{1}{w(s)}\,ds. \label{6.8}
\end{equation}

\item
For  $w\in Q(\R_+)$, by $\varphi(\rho)$ we  denote the unique solution of  the integral equation
\begin{equation}
\rho= \int _0^{\varphi(\rho)}\frac{1}{w(s)}\,ds.\label{6.9}
\end{equation}
\end{enumerate}
\end{df}
\medskip
First  we assume that $w\in P(\R_+)$, then
by differenciating (\ref{6.8}) we  have
\begin{equation}\varphi'(\rho)= \frac{w(\varphi(\rho))}{\rho^2},\quad
\lim_{\rho\to+0}\varphi(\rho)= 0, \quad \varphi(1/\mu)=\eta.\label{6.7}
\end{equation}
We  see that $\varphi$ is strictly monotone and  satisfies
 $ \varphi^{-1}(t)= {1}/{\left(\mu+ \int_t^\eta\frac1{w(s)}\,ds\right)}$.
\par \medskip
Secondly  we  assume that $w\in Q(\R_+)$, then 
 by differentiating (\ref{6.9}) we  have
\begin{equation}
 \varphi'(\rho)= w(\varphi(\rho)),\quad \lim_{\rho\to+0}\varphi(\rho)= 0.\label{6.12}
\end{equation}
Again $\varphi$ is strictly monotone  and  satisfies
 $ \varphi^{-1}(t)=  \int_0^t\frac1{w(s)}\,ds$.
 \par\medskip
Then we  define
\par\medskip
\begin{df} \label{Definition3.2}
\begin{enumerate}
\item
For $w\in P(\R_+)$, we  set  for $\rho\in (0, \varphi^{-1} ({\eta}))$
\begin{equation}
H(\rho)= \frac{\rho \varphi'(\rho)}{ \varphi(\rho)} = 
\frac{ w(\varphi(\rho))}{  \varphi(\rho)} \left( \mu+ \int^\eta_{\varphi(\rho)}w(s)^{-1}\,ds\right).\label{6.10}
\end{equation}
\item
For $w\in Q(\R_+)$ we  set for $\rho\in (0, \varphi^{-1} ({\eta}))$
\begin{equation}
H(\rho)= \frac{\rho \varphi'(\rho)}{ \varphi(\rho)} = 
\frac{  w(\varphi(\rho))}{ \varphi(\rho)}\int_0^{\varphi(\rho)} w(s)^{-1}\,ds. \label{6.13}
\end{equation}
\end{enumerate}
\end{df}
Now  we introduce {\bf the non-degenerate condition (NDC)} on $H(\rho)$ which assures  that
 $H(\rho)$ is  bounded away from $0$ as $\rho\to + 0$:  

\begin{df}$(${\bf the non-degenerate condition} $)$  Let $\eta>0$  and $w\in W(\R_+)$.  A weight function $w$ is said to satisfy   the non-degenerate condition (\ref{H-condition})  if 
\begin{equation}
C_0:=\dstl\inf_{0<\rho\le \varphi^{-1}(\eta)} H(\rho)>0.
%
\label{H-condition} \end{equation}
Here  
 $H(\rho)$ is defined  by Definition \ref{Definition3.2}.
\end{df}
Here we state the $n$-dimensional CKN-type inequalities, which is a natural extension of 
 the classical CKN-type inequalities (\ref{1.4}) and (\ref{CKNc}) in Section 2.

\begin{thm}\label{th4.2} Let $n\ge 1$, $1<p\le q <\infty$, $\mu>0$, $\eta>0$ and $ 0 \le 1/p -1/q \le1/n$. 
Assume that $w\in W(\mathbf R_+)$.
Moreover assume  that if $n>1$ and  $p<q$,  $H(\rho)$ satisfies the non-degenerate condition (\ref{H-condition}).
Then,  we have  the followings:
\begin{enumerate}\item
There exists a positive  number $C_n=C_n(p,q,\eta,\mu,w)$ such  that
 for any $u\in C_c^\infty(B_\eta\setminus \{0\})$  we  have
\begin{equation}
\int_{B_\eta} |\nabla u|^p w(|x|)^{p-1}|x|^{1-n}\,dx\ge  C_n \left(  \int_{B_\eta} 
\frac{ |u|^q |x|^{1-n}\,dx}{ w(|x|) f_\eta (|x|)^{1+q/{p'}}} \right)^{p/q}\label{6.16'}
\end{equation}
where $f_\eta(t)$ is  defined by (\ref{feta}).  

\item 
If $C_n$ is the best constant, then $C_n$ satisfies the followings:
\begin{enumerate}\item
If $n=1$, then $C_1=S^{p,q;1/p'}(\R)= 2^{p/q-1}S_{ \rm rad}^{p,q;1/p'}(\R)$.
\item 
If  $n>1$, then 
$C_n\ge\min(C_0^p,1)S^{p,q;1/p'}(\R^n)\:$ $(p<q)$; $C_n=(1/p')^p\:$ $(p=q)$. \par\noindent 
\item  If  $C_0\ge 1$ and  $1/p'\le \gamma_{p,q}$, then 
$$C_n=  S^{p,q;1/p'}(\R^n)=S_{\rm rad}^{p,q;1/p'}(\R^n).$$
\end{enumerate}
Here $C_0$ is  given by (\ref{H-condition}) and $\gamma_{p,q}$ is given by 
\begin{equation}\dstl{
 \gamma_{p\;\!\!,q}^{}=\dfrac{n-{\;\!\!}1}{1{\;\!\!}+q/p'\;\!\!}.
}
\end{equation}

\end{enumerate}
\end{thm}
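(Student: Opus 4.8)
The plan is to reduce the $n$-dimensional weighted inequality \eqref{6.16'} to a one-dimensional Hardy-type inequality by a change of variables on the radial part, and then to invoke the classical CKN-type inequalities \eqref{1.4} and \eqref{CKNc} (together with Lemma \ref{Lemma2.1} when $n=1$) for the sharp constant. First I would pass to polar coordinates $x=r\omega$, $r=|x|$, $\omega\in\Sn$, and write $|\nabla u|^2=|u_r|^2+r^{-2}|\nabla_\omega u|^2$. Discarding the angular part of the gradient in the first (easy) direction gives
\begin{equation}
\int_{B_\eta}|\nabla u|^p w(|x|)^{p-1}|x|^{1-n}\,dx\ \ge\ \int_{\Sn}\!\!\int_0^\eta |u_r|^p\,w(r)^{p-1}\,dr\,d\omega,
\end{equation}
so that, for each fixed $\omega$, we are left with estimating $\int_0^\eta|u_r|^p w(r)^{p-1}\,dr$ from below by $\bigl(\int_0^\eta |u|^q w(r)^{-1} f_\eta(r)^{-1-q/p'}\,dr\bigr)^{p/q}$. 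This is precisely the one-dimensional ($n=1$) case of \eqref{6.16'}, which the excerpt says was established in \cite{ho3}; so for $n>1$ the inequality follows once we also control the constant, and for $n=1$ there is nothing to reduce.

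The substitution that trivialises the one-dimensional problem is $t=\varphi(\rho)$ from Definition \ref{Definition3.1}: setting $v(\rho)=u(\varphi(\rho))$ one computes, using \eqref{6.7} resp. \eqref{6.12}, that $w(\varphi(\rho))^{p-1}\,|u_r|^p\,dr$ transforms into a constant multiple of $\rho^{p(1+\gamma)-1}|v'(\rho)|^p\,d\rho$ with $\gamma=1/p'$ (the exponent $1+q/p'$ in $f_\eta$ is exactly what makes the right-hand side weight transform correctly), while $\varphi^{-1}(t)$ equals $1/f_\eta(t)$ when $w\in P(\R_+)$ and $f_\eta(t)$ when $w\in Q(\R_+)$. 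Hence after the change of variables the one-dimensional inequality becomes the $n=1$ non-critical CKN inequality \eqref{1.4} on a half-line with exponent $\gamma=1/p'$ (the $P$-case) or its critical/logarithmic counterpart \eqref{CKNc} (the $Q$-case when $w^{-1}$ is integrable), and Lemma \ref{Lemma2.1} gives $C_1=S^{p,q;1/p'}(\R)=2^{p/q-1}S_{\rm rad}^{p,q;1/p'}(\R)$, proving part (2a).

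For $n>1$ and $p<q$ the point is that discarding the angular gradient above is too lossy to recover the sharp constant directly; instead I would keep $|\nabla u|^p\ge(|u_r|^2+r^{-2}|\nabla_\omega u|^2)^{p/2}$, apply the change of variables $t=\varphi(\rho)$ in the radial direction, and observe that the Jacobian factors produce exactly the weight $\rho^{p(1+\gamma)-n}$ on the left and $\rho^{\gamma q-n}$ on the right \emph{up to the factor $H(\rho)$} coming from $r\,dr$ versus $\rho\,d\rho$: precisely, the radial weight picks up $H(\rho)^{\,p-1}$ and the measure on the right picks up $H(\rho)^{-1}$, so the transformed functional is bounded below by $\min(C_0^{p},1)$ times the genuine $n$-dimensional CKN functional $E^{p,q;1/p'}$ on $B_{\varphi^{-1}(\eta)}\subset\R^n$. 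Taking the infimum and using \eqref{best} yields $C_n\ge\min(C_0^{p},1)\,S^{p,q;1/p'}(\R^n)$, which is part (2b) ($p<q$); the case $p=q$ is the Hardy-type inequality of \cite{ho3} with best constant $(1/p')^p$. Finally, for part (2c), when $C_0\ge1$ we get $C_n\ge S^{p,q;1/p'}(\R^n)$, and the matching upper bound $C_n\le S^{p,q;1/p'}(\R^n)$ comes from testing \eqref{6.16'} with radial functions of the form $u(x)=v(\varphi^{-1}(|x|))$ built from near-extremals of the radial problem, together with the identity $S^{p,q;1/p'}(\R^n)=S_{\rm rad}^{p,q;1/p'}(\R^n)$ valid for $1/p'\le\gamma_{p,q}$ (this is the content of Remark \ref{remark2.2} / Proposition \ref{classicalCKN}). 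I expect the main obstacle to be the bookkeeping in the change of variables for $n>1$: verifying that \emph{all} the weight exponents and the $H(\rho)$ factors land exactly where claimed (so that the $P$- and $Q$-cases genuinely collapse to the single exponent $\gamma=1/p'$), and checking that the degenerate behaviour of $w$ near $0$ does not obstruct the density/approximation argument needed to pass between $C_c^\infty(B_\eta\setminus\{0\})$ and the transformed test-function class — this is exactly where the non-degenerate condition \eqref{H-condition} is used.
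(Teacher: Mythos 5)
Your overall strategy coincides with the paper's: polar coordinates, the substitution $r=\varphi(\rho)$, reduction to the classical non-critical CKN inequality at exponent $\gamma=\pm 1/p'$, the non-degenerate condition to control the constant, and radial test functions for part (2c). However, three details are genuinely off. First, your opening reduction is not valid for $n>1$, $p<q$: integrating the slicewise one-dimensional inequality over $\omega\in\Sn$ produces $\int_{\Sn}\bigl(\int_0^\eta|u|^q\cdots\,dr\bigr)^{p/q}d\omega$, which does \emph{not} dominate $\bigl(\int_{\Sn}\int_0^\eta|u|^q\cdots\bigr)^{p/q}$ when $p/q<1$ (take $u$ concentrated near a single direction $\omega_0$). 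So the $n$-dimensional inequality does not ``follow once we also control the constant'' from the 1-d case; indeed Theorem \ref{Thm3.3} shows \eqref{6.16'} can fail outright when $H(\rho)\to0$ even though the 1-d inequality always holds, so no argument that discards the angular gradient can succeed. Second, your bookkeeping of where $H$ lands is wrong: after the substitution, both the radial-derivative weight and the right-hand weight become \emph{exact} pure powers of $\rho$ (in the $P$-case, $\rho^{2(p-1)}$ on the left and $\rho^{q/p'-1}$ on the right, with no $H$ factors anywhere); $H(\rho)$ appears solely as the coefficient of the angular derivative, i.e.\ the transformed integrand is $\bigl((\partial_\rho U)^2+H(\rho)^2(\Lambda U)^2/\rho^2\bigr)^{p/2}$. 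This is precisely why the non-degenerate condition yields the pointwise bound by $\min(C_0^p,1)\,|\nabla_y U|^p$ and why (2c) is immediate for radial $U$, where the $H$-term vanishes identically. Third, the $Q$-case does not reduce to the critical inequality \eqref{CKNc}; it reduces to the non-critical inequality \eqref{1.4} with $\gamma=-1/p'$, after which one invokes $S^{p,q;-1/p'}=S^{p,q;1/p'}$ from Proposition \ref{classicalCKN}. With these corrections your argument becomes the paper's proof.
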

\par\medskip
Theorem \ref{th4.2} will be  established in Section 4 by  virtue of  the variational principle and the  non-critical CKN-type inequalities.
%
We also  remark that
if $p=q$, then this  has been partially  treated  in \cite{ho3} as the Hardy-type inequality.
\par\medskip

Let  us consider the case where
 (NDC) is  violated, that is,  $ \dstl\inf_{0<\rho\le \varphi^{-1}(\eta)} H(\rho)=0$ holds. 
From  Definition \ref{Definition3.2} we  see that $H(\rho)>0 $ for $\rho\in (0, \varphi^{-1}(\eta))$,  hence   $\dstl\liminf_{\rho\to+0} H(\rho)=0$.
If we  assume  $\dstl{\lim_{\rho\to +0}H(\rho)= 0 }$ in stead, then the  inequality (\ref{6.16'}) fails to hold. Namely:
\par\medskip
\begin{thm}\label{Thm3.3}
 Let $n>1 $, $1<p< q <\infty$, $\mu>0$, $ 0 \le 1/p -1/q \le1/n$ and $R>0$. Assume that $w\in W(\mathbf R_+)$.
If $\dstl{\lim_{\rho\to +0}H(\rho)= 0 }$, then  the inequality (\ref{6.16'}) is not valid.
\end{thm}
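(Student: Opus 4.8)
The plan is to disprove the inequality \eqref{6.16'} by constructing an explicit sequence of test functions that makes the quotient
\[
E[u]=\frac{\int_{B_\eta} |\nabla u|^p w(|x|)^{p-1}|x|^{1-n}\,dx}{\left( \int_{B_\eta} |u|^q |x|^{1-n}\,dx\, / \, \bigl( w(|x|) f_\eta (|x|)^{1+q/p'}\bigr) \right)^{p/q}}
\]
tend to $0$. As throughout the paper, the first move is to reduce to the one–dimensional radial problem by passing to the variable $\rho=\varphi^{-1}(|x|)$ (equivalently $|x|=\varphi(\rho)$) introduced in Definition \ref{Definition3.1}. Using the relations \eqref{6.7} or \eqref{6.12} together with the identities $\varphi^{-1}(t)=1/f_\eta(t)$ (for $w\in P(\R_+)$) and $\varphi^{-1}(t)=f_\eta(t)$ (for $w\in Q(\R_+)$), a radial function $u(x)=v(\rho)$ should transform the two weighted integrals into, up to the surface measure $\omega_{n-1}$ of $S^{n-1}$, expressions of the form $\int_0^{\rho_0} |v'(\rho)|^p\, \rho^{\,?}\,d\rho$ against $\int_0^{\rho_0} |v(\rho)|^q \rho^{\,?}\,d\rho$, with the power of $\rho$ chosen precisely so that, when $H(\rho)\equiv\text{const}$, one recovers the scale–invariant one–dimensional inequality underlying $S^{p,q;1/p'}$. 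The point of the change of variables is exactly that the factor $H(\rho)=\rho\varphi'(\rho)/\varphi(\rho)$ is what multiplies the gradient term relative to the model case, so the hypothesis $\lim_{\rho\to+0}H(\rho)=0$ means the Dirichlet-type integral is being suppressed near the origin compared with the right-hand side.

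With that reduction in hand, the core of the argument is a direct concentration/capacity-type estimate. Fix a reference radial bump $\psi\in C_c^\infty$ and, for a small parameter $\rho_0\to 0$, take radial test functions $u_{\rho_0}$ supported where $\varphi^{-1}(|x|)\le\rho_0$, built so that $u_{\rho_0}\equiv 1$ on a fixed fraction of that region and decays to $0$ on an annulus. Because $H(\rho)\to 0$ as $\rho\to 0$, the numerator $\int |\nabla u_{\rho_0}|^p w^{p-1}|x|^{1-n}\,dx$ carries an extra factor that is $o(1)$ relative to what it would be in the scale-invariant model, whereas the denominator is controlled from below by the mass of $u_{\rho_0}$ in the corresponding weighted $L^q$ norm, which does not see this degeneration. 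Quantitatively, one chooses $u_{\rho_0}$ so that in the $\rho$-variable it is (a rescaling of) a fixed profile on $(0,\rho_0)$; then the numerator is bounded by a constant times $\sup_{0<\rho\le\rho_0}H(\rho)^{p-1}$ (or a similar quantity — the exact exponent of $H$ is to be tracked in the change of variables) times the numerator of the model problem for that profile, which itself is comparable to the denominator. Letting $\rho_0\to 0$ forces $E[u_{\rho_0}]\to 0$, contradicting any putative positive constant $C_n$ in \eqref{6.16'}. Here the hypotheses $1<p<q$ and $0\le 1/p-1/q\le 1/n$ are used to guarantee that the finite-dimensional pieces (the contribution of the nonradial directions, and the integrability at the outer boundary of the annulus) behave as expected; in particular $p<q$ is what makes the concentration scaling favorable.

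The main obstacle I expect is bookkeeping of the change of variables: one must verify carefully that under $|x|=\varphi(\rho)$ the weight $w(|x|)^{p-1}|x|^{1-n}\,dx$ and the potential $|x|^{1-n}/\bigl(w(|x|)f_\eta(|x|)^{1+q/p'}\bigr)\,dx$ really do collapse to powers of $\rho$ with the factor $H(\rho)$ appearing exactly in the gradient term and nowhere else, separately in the $P(\R_+)$ and $Q(\R_+)$ cases (the two cases differ by whether $\varphi^{-1}=1/f_\eta$ or $\varphi^{-1}=f_\eta$, which flips a sign in an exponent but the computation is parallel). A secondary point requiring care is that $u_{\rho_0}$ should be genuinely in $C_c^\infty(B_\eta\setminus\{0\})$ — i.e. it must be cut off away from the origin as well — so one actually works on a thin shell $\{\e\rho_0<\varphi^{-1}(|x|)<\rho_0\}$ and checks that the inner cutoff contributes negligibly, which again follows because $H\to0$. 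Once the reduction is done cleanly, the estimate itself is short.
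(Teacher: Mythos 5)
There is a genuine gap, and it is located exactly at the point you flag as the "core of the argument." Under the change of variables $|x|=\varphi(\rho)$ the factor $H(\rho)$ does \emph{not} multiply the whole gradient: writing $U(\rho\omega)=u(\varphi(\rho)\omega)$ one gets
\[
(\partial_r u)^2+\frac{(\vLambda u)^2}{r^2}
=\varphi'(\rho)^{-2}\Bigl( (\partial_\rho U)^2+H(\rho)^2\,\frac{(\vLambda U)^2}{\rho^2}\Bigr),
\]
so $H$ appears only in front of the \emph{angular} derivative $\vLambda U$, while the radial derivative transforms into the clean model weight (this is (\ref{6.11'}) and (\ref{6.15}) in the paper). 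Consequently, for the \emph{radial} concentrating bumps $u_{\rho_0}$ you propose, $\vLambda U\equiv 0$ and $H$ drops out entirely: the quotient $E[u_{\rho_0}]$ is exactly the one–dimensional CKN quotient with $\gamma=1/p'$, which is bounded below by $S^{p,q;1/p'}_{\rm rad}>0$ no matter how fast $H\to 0$. Your claimed bound of the numerator by $\sup_{0<\rho\le\rho_0}H(\rho)^{p-1}$ times the model numerator is therefore false, and no radial sequence can make $E[u]\to 0$. This is also why the theorem requires $n>1$ (and why Theorem \ref{th4.2}(2)(a) asserts the inequality \emph{does} hold when $n=1$ with no condition on $H$): the failure mechanism is intrinsically angular.

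The paper's proof exploits this: it takes product test functions $U_j(\rho\omega)=A_j(\rho)B_j(\omega)$ where $B_j$ is a mollification of some $B\in L^p(S^{n-1})\setminus L^q(S^{n-1})$ (such $B$ exist because $p<q$ and $n>1$), and $A_j(\rho)=\e_j^{-1/p'}A(\rho/\e_j)$ concentrates on $(0,\e_j\tilde\eta)$ with $\e_j$ chosen so small that $H(\rho)^p\int_{S^{n-1}}|\vLambda B_j|^p\,dS\le 1$ there. Then the left-hand side stays finite --- the radial part contributes $\int|B_j|^p\,dS<\infty$ and the angular part is controlled via Hardy's inequality together with the smallness of $H$ --- while the right-hand side blows up like $\bigl(\int|B_j|^q\,dS\bigr)^{p/q}\to\infty$. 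To repair your argument you would need to replace your radial bumps by such products $A_j(\rho)B_j(\omega)$ and shift the burden of the proof from "the numerator degenerates" to "the denominator blows up while the $H$-damped angular term keeps the numerator finite."
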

Intuitively speaking, if
 $w \in P(\R_+)$  and $w$  vanishes   infinitely at  the origin or if
 $w \in Q(\R_+)$  and $w$ blows 
 up   infinitely at  the origin, then   (NDC) is violated.
 To explain more accurately, we introduce the following notion.
\par\medskip
\begin{df}\begin{enumerate}\item
For $w(t)\in P(\R_+)$,  $w(t)$  is  said to  vanish at  the origin in the infinite order,  if and only if
 for an arbitrary positive integer $m$ there exist  a positive number $t_m$  such  that $t_m\to 0 $ as $m\to\infty$ and 
\begin{equation} w(t)\le t^m,  \qquad t\in (0,t_m). \label{vanishinfinite}
\end{equation}
\item
For $w(t)\in Q(\R_+)$  $w(t)$ is  said to blow up at  the origin in the  infinite order,  if and only if
 for an arbitrary positive integer  $m$ there exists a positive $t_m$  such  that 
 $t_m\to 0 $ as $m\to\infty$ and
\begin{equation} w(t)\ge t^{-m},  \qquad t\in (0,t_m). \label{blowinfinite}
\end{equation}
\end{enumerate}
\end{df}
\par\medskip
Then we have  the following:
\par\medskip

\begin{thm}\label{Thm3.4} If
 $w \in P(\R_+)$  and $w$  vanishes at  the origin in  the  infinite order,  or if
 $w \in Q(\R_+)$  and $w$ blows 
 up  at  the origin  in the  infinite order, then   (NDC) is not satisfied.
\end{thm}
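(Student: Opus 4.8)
The plan is to show that the infinite-order vanishing (resp. blow-up) of $w$ forces $\liminf_{\rho\to+0}H(\rho)=0$, which by definition means $C_0=\inf_{0<\rho\le\varphi^{-1}(\eta)}H(\rho)=0$, i.e.\ (NDC) fails. I will treat the two cases separately but symmetrically, working directly from the explicit formulas (\ref{6.10}) and (\ref{6.13}) for $H(\rho)$. The key observation is that $H(\rho)$ can be rewritten purely in terms of the weight: setting $t=\varphi(\rho)$ and recalling $\varphi^{-1}(t)=1/f_\eta(t)$ for $w\in P(\R_+)$ and $\varphi^{-1}(t)=f_\eta(t)$ for $w\in Q(\R_+)$, we get
\begin{equation}
H(\varphi^{-1}(t))=\frac{w(t)}{t}\,g(t),\qquad
g(t)=\begin{cases}\mu+\int_t^\eta w(s)^{-1}\,ds,& w\in P(\R_+),\\[1mm]\int_0^t w(s)^{-1}\,ds,& w\in Q(\R_+).\end{cases}
\end{equation}
Since $\varphi$ is a strictly monotone bijection with $\varphi(\rho)\to0$ as $\rho\to+0$, it suffices to exhibit a sequence $t_m\to0$ along which $\frac{w(t_m)}{t_m}g(t_m)\to0$.

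For the case $w\in P(\R_+)$: here $g(t)$ is nondecreasing as $t\to0$ and bounded, in fact $g(t)\to\mu+\int_0^\eta w(s)^{-1}\,ds$, which may be $+\infty$; but in any case $g$ is finite on $(0,\eta)$ and, more usefully, $g(t)\le C_\eta+\int_t^{t_0}w(s)^{-1}\,ds$ for $t<t_0$ with $t_0$ fixed small. Using the infinite-order vanishing, pick $m\ge 2$ and the associated $t_m$ with $w(t)\le t^m$ on $(0,t_m)$. Then on $(0,t_m)$ we have $w(s)^{-1}\ge s^{-m}$, so $g$ could blow up; the right way is instead to bound $\frac{w(t)}{t}g(t)$ directly. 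Choose the evaluation point to be $t_m$ itself (or a slightly smaller point): then $\frac{w(t_m)}{t_m}\le t_m^{m-1}$, while $g(t_m)=\mu+\int_{t_m}^\eta w(s)^{-1}\,ds$ is a fixed finite number depending on $m$ (since $w$ is continuous and positive on $[t_m,\eta]$, the integral is finite). Hence $H(\varphi^{-1}(t_m))\le t_m^{m-1}\,g(t_m)$. The only subtlety is that $g(t_m)$ depends on $m$ and could a priori grow with $m$; but we may first fix one value $m_0$ with its $t_{m_0}$, set $A=\mu+\int_{t_{m_0}}^\eta w(s)^{-1}\,ds<\infty$, and then for every $m>m_0$ use $w(s)^{-1}\le s^{-m_0}$ is not available — so instead simply note $g(t_m)\le A+\int_{t_m}^{t_{m_0}}w(s)^{-1}\,ds$ and, shrinking $t_m$ if necessary so that also $w(s)\le s^{m_0}$ there is no help either. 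The clean fix: evaluate not at $t_m$ but at a point $\tau_m\in(0,t_m)$ chosen so small that $\int_{\tau_m}^{t_m}w(s)^{-1}\,ds\le 1$ is impossible in general; therefore the correct strategy is to bound $\frac{w(\tau)}{\tau}\int_\tau^\eta w(s)^{-1}\,ds$ by splitting the integral at $t_m$ and using $w(\tau)\le\tau^m$, $\frac{w(\tau)}{\tau}\int_\tau^{t_m}w(s)^{-1}\,ds\le \frac{\tau^m}{\tau}\cdot\frac{t_m}{\tau^{?}}$ — this needs monotonicity of $w$, which we do not have. I will therefore instead argue: by (\ref{vanishinfinite}) with exponent $m+1$, $w(\tau)\le\tau^{m+1}$ on $(0,t_{m+1})$, and for such $\tau$, $\frac{w(\tau)}{\tau}\int_\tau^{t_{m+1}}w(s)^{-1}\,ds\le \tau^{m}\int_\tau^{t_{m+1}}w(s)^{-1}\,ds$; but $\int_\tau^{t_{m+1}}w(s)^{-1}\,ds\le$ (a quantity $\to\infty$), yet multiplied by $\tau^m$ — choosing $\tau=\tau_m$ so that $\tau_m^m\bigl(\mu+\int_{\tau_m}^\eta w(s)^{-1}ds\bigr)\le 1/m$, which is possible since the bracket is finite for each fixed $\tau_m>0$ and $\tau_m^m\to0$, we conclude $H(\varphi^{-1}(\tau_m))\le 1/m\to0$. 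This is the argument I will write out.

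The case $w\in Q(\R_+)$ is dual and in fact cleaner: here $g(t)=\int_0^t w(s)^{-1}\,ds\to0$ as $t\to0$, so both factors in $\frac{w(t)}{t}g(t)$ are small when $w$ is not too large, and the content is that $w$ being huge makes $g$ tiny faster than $\frac{w(t)}{t}$ grows. Using (\ref{blowinfinite}), fix $m\ge2$ and $t_m$ with $w(s)\ge s^{-m}$ on $(0,t_m)$, hence $w(s)^{-1}\le s^m$ there, giving $g(t)=\int_0^t w(s)^{-1}\,ds\le t^{m+1}/(m+1)$ for $t<t_m$. Thus $\frac{w(t)}{t}g(t)$ is not yet obviously small because $w(t)/t$ can be large; but I instead estimate differently: for $t<t_m$, $H(\varphi^{-1}(t))=\frac{w(t)}{t}\int_0^t w(s)^{-1}ds$, and I split $\int_0^t=\int_0^{t/2}+\int_{t/2}^t$. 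On $(t/2,t)$ we have $w(s)^{-1}\le (\,?\,)$ — again monotonicity is missing — so I will instead use the substitution $t=\varphi(\rho)$ directly: $H(\rho)=\rho\varphi'(\rho)/\varphi(\rho)$, and note $\frac{d}{d\rho}\log\varphi(\rho)=H(\rho)/\rho$, whence $\log\bigl(\varphi(\rho)/\varphi(\rho_0)\bigr)=\int_{\rho_0}^\rho H(r)/r\,dr$; if $H(r)\ge\delta>0$ for all small $r$ then $\varphi(\rho)\ge c\rho^{1/\delta}$ near $0$, and then $\varphi^{-1}(t)=\int_0^t w(s)^{-1}ds\le C t^\delta$, forcing $w(s)^{-1}\gtrsim s^{\delta-1}$ hence $w(s)\lesssim s^{1-\delta}$ in an averaged (and, via the fundamental theorem applied to the monotone primitive, pointwise) sense, contradicting $w(s)\ge s^{-m}$ for every $m$. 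This last contrapositive route — assume $C_0=\delta>0$, integrate the ODE $H=\rho\varphi'/\varphi$ to get a polynomial lower bound $\varphi(\rho)\ge c\rho^{1/\delta}$, translate back to an upper bound on $w$ near $0$ via $\varphi^{-1}$, and contradict the infinite-order hypothesis — is in fact the uniform way to handle \emph{both} cases $P$ and $Q$ simultaneously, and I expect this to be the cleanest presentation.

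The main obstacle is precisely the absence of any monotonicity or regularity of $w$ beyond $C^1$ and positivity, which blocks the naive pointwise-to-integral comparisons. The contrapositive-ODE approach sidesteps this: from $C_0=\delta>0$ we get, for $0<\rho\le\varphi^{-1}(\eta)$, $(\log\varphi)'(\rho)=H(\rho)/\rho\le$ (upper bound not needed) and $\ge\delta/\rho$, hence integrating from $\rho$ to $\varphi^{-1}(\eta)$ yields $\varphi(\rho)\le\varphi(\varphi^{-1}(\eta))\,(\rho/\varphi^{-1}(\eta))^{\delta}=\eta\,(\rho/\varphi^{-1}(\eta))^{\delta}$. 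Feeding $t=\varphi(\rho)$, i.e.\ $\rho=\varphi^{-1}(t)$, this says $t\le\eta(\varphi^{-1}(t)/\varphi^{-1}(\eta))^{\delta}$, i.e.\ $\varphi^{-1}(t)\ge \varphi^{-1}(\eta)(t/\eta)^{1/\delta}$. For $w\in Q(\R_+)$, $\varphi^{-1}(t)=\int_0^t w(s)^{-1}ds$, so $\int_0^t w(s)^{-1}ds\ge c\,t^{1/\delta}$; since the integral of $w^{-1}$ over $(0,t)$ is $\ge c t^{1/\delta}$ for all small $t$ while $w(s)^{-1}\le s^m$ on $(0,t_m)$ gives $\int_0^t w(s)^{-1}ds\le t^{m+1}/(m+1)$, taking $m+1>1/\delta$ and $t\to0$ yields $c t^{1/\delta}\le t^{m+1}/(m+1)$, impossible. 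For $w\in P(\R_+)$, $\varphi^{-1}(t)=1/f_\eta(t)=1/\bigl(\mu+\int_t^\eta w(s)^{-1}ds\bigr)$, so the inequality $\varphi^{-1}(t)\ge c t^{1/\delta}$ becomes $\mu+\int_t^\eta w(s)^{-1}ds\le C t^{-1/\delta}$; but (\ref{vanishinfinite}) gives $w(s)^{-1}\ge s^{-m}$ on $(0,t_m)$, so $\int_t^{t_m}w(s)^{-1}ds\ge\int_t^{t_m}s^{-m}ds=\frac{t^{1-m}-t_m^{1-m}}{m-1}$, which for $m-1>1/\delta$ and $t\to0$ grows faster than $t^{-1/\delta}$, again a contradiction. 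Hence $C_0=0$, proving (NDC) is violated in both cases. $\qquad\blacksquare$
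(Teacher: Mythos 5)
Your final ``contrapositive-ODE'' argument is correct and is essentially the paper's own proof: both assume $C_0=\inf H\ge\delta>0$, integrate $\varphi'(\rho)/\varphi(\rho)=H(\rho)/\rho\ge\delta/\rho$ to obtain the polynomial bound $\varphi(\rho)\le \eta\,(\rho/\varphi^{-1}(\eta))^{\delta}$, and then contradict it using the infinite-order hypothesis for $m$ large (the paper does this pointwise via $H(\rho)=w(\varphi(\rho))/(\rho\,\varphi(\rho))$ and $w(t)\le t^{m}$, while you pass to $\varphi^{-1}(t)$ and integrate $w^{\mp m}$ --- a cosmetic difference). The abandoned direct attempts in your first two paragraphs should simply be deleted; only the last paragraph is the proof.
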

\par\medskip
To help understand the theorems, we give typical examples: 
\par\medskip
\begin{exam}
\begin{enumerate}\item When either $w(t)=e^{-1/t} \in P(\R_+)$ or $w(t)=e^{1/t}\in Q(\R_+)$, \par $H(\varphi^{-1}(t))= O(t)$ as $t\to +0$, or equivalently $ H(\rho)= O(\varphi(\rho))$ as $\rho=\varphi^{-1}(t)\to +0$. 
\item Moreover,
if $w(t)= \exp({\pm t^{-\a}}) $ with $\a> 0$, then $H(\varphi^{-1}(t))= O(t^{\a})$ as $t\to +0$. 
{ In fact, it  holds  that $\dstl\lim_{t\to +0} H(\varphi^{-1}(t))/t^{\a}=1/\a$. }

\end{enumerate}
\end{exam}

\par\medskip
\begin{exam}
Let $n\ge 1$, $1<p\le q <\infty$,  $ 0 \le 1/p -1/q \le1/n$ and $0<\eta$.
\par\noindent  Let $w(t)= t^{p'\gamma+1}$. 
\par
If $\gamma> 0$, then $w(t)\in P(\R_+)$ and we  have for $\mu=  \frac1{p'\gamma}\eta^{-p'\gamma}$,
$$ f_\eta (t) =\frac1{p'\gamma}t^{-p'\gamma},\quad
\frac1\rho=\frac1{p'\gamma}\varphi(\rho)^{-p'\gamma}\quad\text{and}\quad  H(\rho)=\frac{\rho \varphi'(\rho)}{\varphi(\rho)}=\frac{1}{p'\gamma}.$$
If $\gamma=0 $, then $w(t)\in P(\R_+)$ and we  have for $\mu>0$,
$$ f_\eta(t)=  \mu+  \log \eta/t, \quad \frac1\rho=  \mu+  \log \eta/{\varphi(\rho)}\quad\text{and}\quad  H(\rho)=\frac 1\rho\: (\ge \mu).$$
If $\gamma <0$, then $w(t)\in Q(\R_+)$ 
$$ f_\eta(t)  =-\frac1{p'\gamma}t^{-p'\gamma},\quad
\rho=-\frac1{p'\gamma}\varphi(\rho)^{-p'\gamma}\quad\text{and}\quad
H(\rho)=\frac{\rho \varphi'(\rho)}{\varphi(\rho)}=-\frac{1}{p'\gamma}.  $$
Thus  we have 
$$ C_0=\begin{cases} &  (p'|\gamma|)^{-1},\qquad \text{ if } \; \gamma\neq 0,\\
& \mu,\quad \qquad\qquad \text{ if }\; \gamma =0.
\end{cases}$$
Assume that either $0< |\gamma|\le 1/p'$ or $\gamma=0, \,\mu\ge 1$. 
Then we  have $H(\rho) \ge 1 $ in $ (0,\varphi^{-1}(\eta))$.
\par\noindent From Theorem \ref{th4.2}, noting that
 $w(|x|)^{p-1}|x|^{1-n} = |x|^{p(1+\gamma)-n}$, we  have 
\begin{equation}
\int_{B_\eta} |\nabla u|^p |x|^{p(1+\gamma)-n}\,dx\ge  C \left(  \int_{B_\eta} 
\frac{ |u|^q |x|^{1-n}\,dx}{ w(|x|) f_\eta (|x|)^{1+q/{p'}}} \right)^{p/q}\label{6.16}
\end{equation}
where  $C=\min(C_0^p,1)S^{p,q;1/p'}= S^{p,q;1/p'}$
\end{exam}
We give a  non-doubling weight function $w(t)\in W(\R_+)$ for  which the condition (NDC) is satisfied.
\begin{exam}\label{example3.3}
Let $1<\eta$ and for  simplicity we make examples with $(0,\eta]$ instead of $R_+$.
First we give an example in $P((0,\eta])$. (For the  definition of $P((0,\eta])$ and $Q((0,\eta])$, see Remark \ref{remark3.1}.)
Define
$$ z(t)= \frac1{\log(e\eta/t)}, \qquad  0<t\le\eta.$$
We take a function  $z_1(t) \in C^1((0,\eta])$ such  that $z_1(t)$  satisfies  the estimate
$$z(t)\le z_1(t)\le 1$$  and the condition  
\begin{equation} \begin{cases}z_1(t_{2m})&= z(t_{2m}),\\
z_1(t_{2m+1})&=1,
\end{cases} \qquad (m=1,2,\cdots), \label{3.12}\end{equation}
where   $t_m=1/m\; (m=1,2,\cdots).$
Now  we  set $$w(t)= t z_1(t)\in C^1((0,\eta]).$$ Since $w(t) =t z_1(t)\le t$ holds, we  have $\int_t^\eta w(s)^{-1}\,ds\ge \int_t^\eta 1/s\,ds= \log(\eta/t)\to \infty \;(t\to +0)$.
Therefore  $w(t)\in P((0,\eta])$. 
We  show  that $w(t) $ becomes a non-doubling weight  function. In fact, if $m$ is an  odd number,  then we  have 
$$ \frac{w(t_{2m})}{w(2t_{2m})}= \frac{w(t_{2m})}{w(t_{m})}=\frac12\frac{1}{\log(2me\eta)}\to 0 \text{  as  } m\to\infty.$$
On  the other hand,  for $\rho=\varphi^{-1}(t) \; (0<t\le\eta)$,
\begin{align*} H(\varphi^{-1}(t))&= \frac{w(t)}{t}\left(\mu +\int_t^\eta \frac1{w(s)}\,ds \right)
= z_1(t)\left(\mu +\int_t^\eta \frac1{sz_1(s)}\,ds \right)\\
&\ge  z(t)\left(\mu+ \int_t^\eta \frac1{s}\,ds\right)= \frac1{\log(e\eta/t)}\cdot (\mu+\log(\eta/t))\ge 1\quad \mbox{ for }\; \mu\ge 1.
\end{align*}
Hence the condition (NDC) is satisfied. \par 
In  the next  we give  an example in $Q((0,\eta])$.
We  set $$z(t)=\log(e\eta/t)\qquad  0<t\le \eta.$$ Take a function $z_1(t)  \in C^1((0,\eta])$ 
satisfying  the condition (\ref{3.12})  and 
the estimate
$$ 1\le z_1(t)\le z(t).$$ 
 We define $$w(t)=tz(t)^2 z_1(t)\in C^1((0,\eta]).$$ 
Since $w(t) =t z(t)^2z_1(t)\ge t z(t)^2$ holds, we  have $\int_0^t w(s)^{-1}\,ds\le \int_0^\eta1/(s(\log(e\eta/s))^2)\,ds= 1$.
Therefore  $w(t)\in Q((0,\eta])$. 
We  show  that $w(t) $ becomes a non-doubling weight function. If $m$ is an  odd number,  then we  have 
$$ \frac{w(t_{2m})}{w(2t_{2m})}= \frac{w(t_{2m})}{w(t_{m})}
=\frac12\frac{(\log(2me\eta))^3}{(\log(me\eta))^2}\to \infty \text{  as  } m\to\infty.$$

On  the other hand,  for $\rho=\varphi^{-1}(t), \; 0<t\le\eta$,
\begin{align*} H(\varphi^{-1}(t))&= \frac{w(t)}{t}\int_0^t \frac1{w(s)}\,ds 
= z(t)^2z_1(t)\int_0^t \frac1{s z(s)^2z_1(s)}\,ds \\
&\ge  z(t)^2 \int_0^t \frac1{sz(s)^3}\,ds=\frac{ (\log(e\eta/t))^2\cdot (\log(e\eta/t))^{-2}}2= \frac12.
\end{align*}
Hence the condition (NDC) is satisfied. \par

\end{exam}

\begin{rem} \label{remark3.1}\begin{enumerate}\item
In Example \ref{example3.3}, $P((0,\eta])$ and  $Q((0,\eta])$ are defined in a similar way by (1.1) with $\R_+$ replaced by $(0,\eta]$.
\item Theorem \ref{th4.2}, Theorem \ref{Thm3.3} and Theorem \ref{Thm3.4} will be established in Section 4, Section 5 and Section 6 respectively.
\item When $p=1$, one  can obtain   similar results as  in this  section under a suitable  modification. See \cite{ho4}.

\end{enumerate}
\end{rem}

\section{Proof of Theorem \ref{th4.2}}

\begin{proof}[Proof of Theorem \ref{th4.2}]
We  employ a polar coordinate system $x= r\omega $ for  $r= |x|$ and $\omega\in S^{n-1}$.
By $\Lap_{\Sn}^{}$  we  denote  the Laplace-Beltrami operator on a unit sphere $\dstl{
 S^{\:\!n\:\!-1}}$. 
 Then  a gradient operator $\vLambda$ on 
 $\dstl{
 S^{\:\!n\:\!-1}
}$ is defined by  
\begin{equation}\dstl{
 {\int_{\;\!\!\Sn}}
 (-{\;\!}\Lap_{\Sn}^{}\xi_1){\:\!}\xi_2{\;\!}dS
 ={\int_{\;\!\!\Sn}}\vLambda{\:\!}\xi_1{\cdot}\vLambda{\:\!}\xi_2{\;\!}dS
 {\,\,\,}\mbox{ for }\xi_1,\xi_2\in C^{\:\!2}(S^{\:\!n\:\!-1})
}.
\end{equation}
Here we have 
\begin{equation}\dstl{
 \Lap{\:\!}\xi_1=\dfrac{1}{r^{\:\!n-1}\;\!\!}
{\partial_r}\big(r^{\:\!n-1}{\partial_r{\:\!}\xi_1}\big)
 +{\;\!\!}\dfrac{1}{r^{2}\;\!\!}\Lap_{\Sn}^{}\xi_1,{\,\,\,}
 |\nabla \xi_1|^{2}=\big|{\partial_r\xi_1}\big|^{2}{\;\!\!}
 +{\;\!\!}\dfrac{1}{r^{2}\;\!\!}|\vLambda{\:\!}\xi_1|^{2}
},
\end{equation}
where
\begin{equation}\dstl{
{\partial_r\xi_1}(x)=\dfrac{x}{|x|}{\cdot}\nabla \xi_1(x)
}.
\end{equation}
\par\medskip
Then, the inequality  (\ref{6.16'}) is  transformed to the  following:
\begin{equation}
\begin{split}
\int_{S^{n-1}}& \,dS \int _0^\eta  \left( (\partial_r u)^2+ \frac{(\vLambda u)^2}{r^2}\right)^{p/2} w(r)^{p-1}\,dr\\
&\ge  C_n \left( \int_{S^{n-1}} \,dS\int _0^\eta
\frac{ |u|^q \,dr}{ w(r) f_\eta (r)^{1+q/{p'}}} \right)^{p/q},\quad( u(x)= u(r\omega)\in C_c^\infty(B_\eta\setminus\{0\}) ).
\end{split}\label{5.13}
\end{equation}
Here  $dS$  denotes  surface elements on  $S^{n-1}$ if $n>1$ and a counting measure on $ S^0 =\{-1,1\} $ if $n=1$.
Here we remark  that if $n=1$, then  $\Lap_{\Sn}=\vLambda=0$.
\par\medskip\noindent
{ Proof of (1).} 
First  we consider the case where $w\in P(\R_+)$, and
 we  set  $y= \rho \omega$ ($\rho=|y|,\;r=\varphi(\rho)$) and $U(y)= U(\rho\omega) =u(\varphi(\rho)\omega)$, then 
$U(y)\in C_c^\infty(B_{\tilde{\eta}}\setminus \{0\})$ with $\tilde{\eta}= \varphi^{-1}(\eta)\,(=1/\mu)$.
Then, the inequality (\ref{5.13}) is transformed to the following: 
\begin{equation}
\begin{split}
\int_{S^{n-1}} \,dS &\int _0^{\tilde \eta}\left( (\partial_\rho U)^2+H(\rho)^2 \frac{(\vLambda U)^2}{\rho^2}\right)^{p/2} \rho^{2(p-1)}\,d\rho \label{6.10}\\
&\ge  C_n \left( \int_{S^{n-1}} \,dS\int _0^{\tilde \eta}
 |U|^q   \rho^{-1+q/{p'}} \,d\rho\right)^{p/q}, 
\end{split}
\end{equation}
or equivalently 
 \begin{align}
\int _{B_{\tilde \eta}}\left( (\partial_\rho U)^2+H(\rho)^2 \frac{(\vLambda U)^2}{\rho^2}\right)^{p/2} \rho^{2p-1-n}\,dy
\ge  C_n \left( \int_{B_{\tilde \eta}} 
 |U|^q   \rho^{q/{p'} -n} \,dy\right)^{p/q}. \label{6.11'}
\end{align}
In order  to show  the inequality (\ref{6.16'}), it suffices to establish (\ref{6.11'})
for  some positive number $C_n$. If $p=q$ holds, then (\ref{6.11'}) follows direct from the classical Hardy inequality, hence  we assume  that $p<q$. 
From (\ref{H-condition}),  the left hand side of (\ref{6.11'}) is estimated from below in the following way.

 \begin{align*}
\int _{B_{\tilde \eta}} \left( (\partial_\rho U)^2+H(\rho)^2 \frac{(\vLambda U)^2}{\rho^2}\right)^{p/2}& \rho^{2p-1-n}\,dy
\ge  
\int _{B_{\tilde \eta}}\left( (\partial_\rho U)^2+C_0^2 \frac{(\vLambda U)^2}{\rho^2}\right)^{p/2} \rho^{2p-1-n}\,dy\\
&\ge \min(C_0^p,1)
\int _{B_{\tilde \eta}}\left( (\partial_\rho U)^2+\frac{(\vLambda U)^2}{\rho^2}\right)^{p/2} \rho^{2p-1-n}\,dy\\ &= \min(C_0^p,1)
\int _{B_{\tilde \eta}}|\nabla_y U|^p\rho^{p(1+1/p')-n}\,dy
\\
&\ge \min(C_0^p,1)S^{p,q;1/p'} \left( \int_{B_{\tilde \eta}} 
 |U|^q   \rho^{q/{p'} -n} \,dy\right)^{p/q}. \label{}
\end{align*}
In the last step we used the CKN type inequality (\ref{1.4}) with $\gamma=1/p'$. This  proves the assertion  (\ref{6.11'}) with $C_n\ge\min(C_0^p,1)S^{p,q;1/p'} $.
\par
Secondly  we  assume  that $w\in Q(\R_+)$.
By putting $r=\varphi(\rho)$,  in a similar way 
the inequality (\ref{5.13}) is transformed to the following: 
\begin{equation}
\begin{split}
\int_{S^{n-1}} \,dS &\int _0^{\tilde \eta}\left( (\partial_\rho U)^2+H(\rho)^2 \frac{(\vLambda U)^2}{\rho^2}\right)^{p/2}\,d\rho  \label{6.14}\\
&\ge  C_n \left( \int_{S^{n-1}} \,dS\int _0^{\tilde \eta}
 |U|^q   \rho^{-1-q/{p'}} \,d\rho\right)^{p/q},
\end{split}
\end{equation}
or equivalently 
 \begin{align}
\int _{B_{\tilde \eta}}\left( (\partial_\rho U)^2+H(\rho)^2 \frac{(\vLambda U)^2}{\rho^2}\right)^{p/2} \rho^{1-n}\,dy
\ge  C_n \left( \int_{B_{\tilde \eta}} 
 |U|^q   \rho^{-q/{p'} -n} \,dy\right)^{p/q}. \label{6.15}
\end{align}
We assume  that $p<q$ by  the same reason in the  previous step. 
Then we  have 

 \begin{align*}
\int _{B_{\tilde \eta}} \left( (\partial_\rho U)^2+H(\rho)^2 \frac{(\vLambda U)^2}{\rho^2}\right)^{p/2}& \rho^{1-n}\,dy
\ge  
\int _{B_{\tilde \eta}}\left( (\partial_\rho U)^2+C_0^2 \frac{(\vLambda U)^2}{\rho^2}\right)^{p/2} \rho^{1-n}\,dy\\
&\ge \min(C_0^p,1)
\int _{B_{\tilde \eta}}\left( (\partial_\rho U)^2+\frac{(\vLambda U)^2}{\rho^2}\right)^{p/2} \rho^{1-n}\,dy\\ &= \min(C_0^p,1)
\int _{B_{\tilde \eta}}|\nabla_y U|^p\rho^{p(1-1/p')-n}\,dy
\\
&\ge \min(C_0^p,1)S^{p,q;-1/p'} \left( \int_{B_{\tilde \eta}} 
 |U|^q   \rho^{-q/{p'} -n} \,dy\right)^{p/q}. \label{}
\end{align*}
In the last step we used the CKN type inequality (\ref{1.4}) with $\gamma=-1/p'$. This  proves the assertion  (\ref{6.15}) with $C_n\ge\min(C_0^p,1)S^{p,q;-1/p'}= \min(C_0^p,1)S^{p,q;1/p'}$. 
\par\medskip\noindent
{\it Proof of (2).} (a). Assume that  $n=1$. Then (\ref{6.16'}) is equivalent to the following: 
For $U\in C_c^\infty(B_{\tilde{\eta}}\setminus \{0\})$ with $\tilde{\eta}= \varphi^{-1}(\eta)$, 
 \begin{equation}\label{4.9}
 \begin{cases}&
\int _{B_{\tilde \eta}} |\partial_\rho U|^p \rho^{2(p-1)}\,dy
\ge  C_1 \left( \int_{B_{\tilde \eta}} 
 |U|^q   \rho^{q/{p'} -1} \,dy\right)^{p/q}, \qquad w\in P(R_+), \\
&\int _{B_{\tilde \eta}}|\partial_\rho U|^p \,dy
\ge  C_1 \left( \int_{B_{\tilde \eta}} 
 |U|^q   \rho^{-q/{p'} -1} \,dy\right)^{p/q},\qquad\qquad  w\in Q(R_+). 
 \end{cases}
 \end{equation}
Then it follows from the  non-critical CKN-type inequalities (\ref{1.4}) with $n=1$  that  we  have $C_1= S^{p,q; 1/p'}(B_{\tilde \eta})$ $= S^{p,q; 1/p'}(\R^n)$. As for  the last equality, see the remark just after Definition \ref{df2.2}.
\par\noindent (b) Assume  that $n>1$. If $p>q$, then  from the argument in (1) 
$C_n\ge \min(C_0^p,1)S^{p,q;1/p'}(\R^n)$ holds. If $p=q$, then the inequalities (\ref{6.10}) and (\ref{6.14}) are reduced to the Hardy-type inequalities, hence we  have $C_n=S^{p,p; 1/p'}_{\rm rad}=(1/p')^p$. (See Proposition\ref{relation}, (1).)
\par\noindent(c)
Assume  that $C_0\ge 1$ and $1/p'\le \gamma_{p,q}=(n-1)/(1+q/p')$. By 
Proposition \ref{classicalCKN} together with
Proposition \ref{relation}, $S^{p,q; 1/p'}=S^{p,q;-1/p'}=S^{p,q; 1/p'}_{\rm rad}=S^{p,q;-1/p'}_{\rm rad}$ holds.
Then one can assume  $U\in C_c^\infty(B_{\tilde{\eta}}\setminus \{0\})_{\rm rad}$ so that we have $\vLambda U\equiv 0$. Therefore the assertion is now clear.

\end{proof}

\begin{rem}
We note  that if $1<p\le (n+1)/2$, then  $1/p'\le \gamma_{p,q}$ for any $q \in (p, np/(n-p)]$.
\end{rem}
\medskip

\section{Proof of Theorem \ref{Thm3.3}}

\begin{proof}
{\bf Step 1.}
We assume  that $H(\rho)\equiv 0$ in $(0,\tilde \eta)$ with $\varphi(\tilde \eta)=\eta$.
First we assume  that $w\in P(\R_+)$. Then
It follows from (\ref{6.10}) that the desired inequality (\ref{6.16}) has  the form

\begin{align}
\int_{S^{n-1}} \,dS \int _0^{\tilde \eta} |\partial_\rho U|^p \rho^{2(p-1)}\,d\rho 
\ge  C \left( \int_{S^{n-1}} \,dS\int _0^{\tilde \eta}
 |U|^q   \rho^{-1+q/{p'}} \,d\rho\right)^{p/q}. \label{6.18}
\end{align}
By $ M(S^{n-1})$ we denote a set of all measurable functions on $S^{n-1}$.
For $A(\rho) \in C^\infty_c((0,\tilde \eta))\setminus \{0\}$  and  $ B(\omega) \in  M(S^{n-1})$, 
we define a test function $U(\rho \omega)=A(\rho) \cdot B(\omega) $ such  that
$$ 0\le B(\omega) \in L^p(S^{n-1}), \quad \text {but}\quad B(\omega) \notin L^q(S^{n-1}).$$
Then, in the inequality (\ref{6.18}),  we see that
\begin{equation}
 \begin{cases}\mbox{(LHS) } &=\int_{S^{n-1} } |B(\omega)|^p \,dS\int_0^{\tilde \eta} |\partial_\rho A(\rho)|^p\rho^{2(p-1)}
\,d\rho<\infty,\\
 \mbox{(RHS) }& =\left(\int_{S^{n-1} } |B(\omega)|^q \,dS\right)^{p/q}
\left(\int_0^{\tilde \eta} | A(\rho)|^q\rho^{q/{p'} -1}
\,d\rho\right)^{p/q}=\infty. 
\end{cases} \label{6.19}
\end{equation}
Here  (LHS)  and (RHS) are abbreviations of the left-hand side and the right-hand side  respectively.
Therefore the  inequality (\ref{6.18}) does not hold, and this proves the assertion provided that $H\equiv0$.
Secondly we assume  that  $w\in Q(\R_+)$.
Then
It follows from (\ref{6.14}) that the desired inequality (\ref{6.16}) has  the form
\begin{align}
\int_{S^{n-1}} \,dS \int _0^{\tilde \eta} |\partial_\rho U|^p\,d\rho 
\ge  C \left( \int_{S^{n-1}} \,dS\int _0^{\tilde \eta}
 |U|^q   \rho^{-1-q/{p'}} \,d\rho\right)^{p/q}. \label{6.20}
\end{align}
Again using the same test function $U=A(\rho)B(\omega)$, we see that
\begin{equation}
 \begin{cases}\mbox{(LHS) } &=\int_{S^{n-1} } |B(\omega)|^p \,dS\int_0^{\tilde \eta} |\partial_\rho A(\rho)|^p
\,d\rho<\infty,\\
 \mbox{(RHS) }& =\left(\int_{S^{n-1} } |B(\omega)|^q \,dS\right)^{p/q}
\left(\int_0^{\tilde \eta} | A(\rho)|^q\rho^{-q/{p'} -1}
\,d\rho\right)^{p/q}=\infty. 
\end{cases}
\end{equation}
Therefore the  inequality (\ref{6.20}) does not hold, and we have  the desired assertion.
\begin{rem} From Theorem \ref{th4.2}, Lemma \ref{Lemma2.1} and  (\ref{4.9})  we  have the  followings:
  \begin{enumerate}
\item For  $w(\rho)=\rho^2$, we have
\begin{equation}
\int_0^{\tilde \eta}  |\partial_\rho A(\rho)|^p\rho^{2(p-1)}
\,d\rho\ge  C \left(\int_0^{\tilde \eta} | A(\rho)|^q\rho^{q/{p'} -1}
\,d\rho\right)^{p/q},
\end{equation}
\item For $w(\rho)=1$ we  have
\begin{equation}
\int_0^{\tilde \eta}  |\partial_\rho A(\rho)|^p
\,d\rho\ge  C \left(\int_0^{\tilde \eta} | A(\rho)|^q\rho^{-q/{p'} -1}
\,d\rho\right)^{p/q},
\end{equation}
\end{enumerate}

where $C$ is a positive number independent of  each function  $A(\rho)\in C^\infty_c((0,\tilde \eta))$.
\end{rem}
\par\medskip\noindent
{\bf Step 2.} 
We assume  that $\dstl\lim_{\rho\to +0} H(\rho)=0$ and $w\in P(\R_+)$.
As in the previous  step, we take  $B(\omega)\in L^p(S^{n-1})$ with 
$B(\omega) \notin L^q(S^{n-1})$. 
Let $B_j(\omega)$ be a mollification of $B$ such  that $B_j(\omega)\in C^\infty(S^{n-1})$,
$B_j \to B  $ in $L^p(S^{n-1})$ but
\begin{equation}
\int_{S^{n-1}}|B_j(\omega)|^q\,dS \to \infty \quad (j\to\infty).\label{infty}
\end{equation}
 Let $\{ \e_j\}$ be  a sequence of  numbers  such that $0<\e_j<1$,    $\e_j\to 0$ as $j\to\infty$ 
and 
\begin{equation}
H(\rho)^{p}\cdot\int _{S^{n-1}} |\vLambda B_j(\omega)|^p \,dS \le 1\qquad ( 0<\rho\le \e_j \tilde\eta,  \, j=1,2,3,\ldots).\label{6.25}
\end{equation}
We take and fix  an $ A(\rho)\in  C^\infty_c((0,  \tilde \eta))\setminus\{0\}$ satisfying  
\begin{equation} \int_0^{\tilde \eta} |\partial_\rho A(\rho)|^p\rho^{2(p-1)}
\,d\rho =1. \label{6.26} \end{equation}
Define 
\begin{equation}A_j(\rho)= \e_j^{-1/{p'}} A(\rho/{\e_j})\qquad (j=1,2,3,\ldots ). \label{Aj}\end{equation}
Then,  for $j=1,2,3,\ldots$ we see that
$ A_j(\rho)\in  C^\infty_c(0,\e_j \tilde \eta) \subset C^\infty_c(0,\tilde \eta) $ and
\begin{align}
 &\int_0^{\tilde \eta \e_j} |\partial_\rho A_j(\rho)|^p\rho^{2(p-1)}
\,d\rho =\int_0^{\tilde \eta}  |\partial_\rho A(\rho)|^p\rho^{2(p-1)}
\,d\rho =1,\label{6.28}\\ 
&\int_0^{\tilde \eta\e_j}  |A_j(\rho)|^q\rho^{-1+q/p'}
\,d\rho=\int_0^{\tilde \eta} | A(\rho)|^q\rho^{-1+q/p'}
\,d\rho.
\label{6.29} \end{align} 
Then we define a sequence of test functions $U_j= A_j(\rho)\cdot B_j(\omega) \in  C^\infty_c((0, \e_j \tilde \eta))\times C^\infty(S^{n-1})$.
If we  show the following properties, then the assertion clearly follows:
\begin{enumerate}
\item
\begin{equation}\label{6.30}
\int_{S^{n-1}} \,dS \int _0^{\tilde \eta\e_j}\left( (\partial_\rho U_j)^2+H(\rho)^2 \frac{(\vLambda U_j)^2}{\rho^2}\right)^{p/2} \rho^{2(p-1)}\,d\rho <\infty,
\end{equation}
\item
\begin{equation}\label{6.31}
 \left( \int_{S^{n-1}} \,dS\int _0^{\tilde \eta\e_j}
 |U_j|^q   \rho^{-1+q/{p'}} \,d\rho\right)^{p/q} \to \infty \quad \mbox{as }\quad j\to\infty.
\end{equation}
\end{enumerate}
It follows from (\ref{infty}) and (\ref{6.29}) we have (\ref{6.31}), and  hence
 it suffices to show (\ref{6.30}).
By  the definition  we immediately see that
\begin{equation}\label{6.32}
 \int_0^{\tilde \eta\e_j} |\partial_\rho A_j(\rho)|^p\rho^{2(p-1)}
\,d\rho \int_{S^{n-1} } |B_j(\omega)|^p \,dS= \int_{S^{n-1} } |B_j(\omega)|^p \,dS<\infty. \end{equation} 
By  Hardy's inequality, for some positive number $C$ we have 
\begin{equation}
\int_0^{\tilde \eta \e_j} |\partial_\rho A_j(\rho)|^p\rho^{2(p-1)}
\,d\rho\ge C \int_0^{\tilde \eta\e_j} |A_j(\rho)|^p  \rho^{p-2}\,d\rho, \qquad (j=1,2,3,\ldots ).\label{6.33}
\end{equation}
Then  we have
\begin{equation}
\begin{split}
\int_0^{\tilde \eta\e_j} |A_j(\rho)|^p & \frac{H(\rho)^p}{\rho^p}\rho^{2(p-1)}\,d\rho \int_{S^{n-1}} |\vLambda B_j(\omega)|^p\, dS\\
&=\int_0^{\tilde \eta\e_j} |A_j(\rho)|^p  \rho^{p-2} H(\rho)^p\,d\rho \int_{S^{n-1}} |\vLambda B_j(\omega)|^p\, dS \\
& \le C^{-1}\int_0^{\tilde \eta\e_j} |\partial_\rho A_j(\rho)|^p\rho^{2(p-1)}
\,d\rho
<\infty\qquad  ( (\ref{6.25}), (\ref{6.33}) )
\end{split}\label{6.34}
\end{equation}
Since $(a^2+b^2)^{p/2}\le 2^{p/2}(a^p+b^p),\,(a,b\ge 0)$, we have (\ref{6.26}), hence the assertion is proved.
\par\medskip
 Secondly we  assume that $w\in Q(\R_+)$. 
Let $B_j(\omega)\in C^\infty(S^{n-1}) \, (j=1,2,3,\ldots)$ be the same function as before. 
We take   an $ A(\rho)\in  C^\infty_c((0,  \tilde \eta))\setminus\{0\}$ satisfying  
\begin{equation} \int_0^{\tilde \eta} |\partial_\rho A(\rho)|^p
\,d\rho =1. \label{6.35} \end{equation}
Define 
\begin{equation}A_j(\rho)= \e_j^{1/{p'}} A(\rho/{\e_j})\qquad (j=1,2,3,\ldots ). \label{Aj2}\end{equation}
Then,  for $j=1,2,3,\ldots$ we see that
$ A_j(\rho)\in  C^\infty_c(0,\e_j \tilde \eta)\subset  C^\infty_c((0,\tilde \eta)) $ and
\begin{align}
 &\int_0^{\tilde \eta \e_j} |\partial_\rho A_j(\rho)|^p
\,d\rho =\int_0^{\tilde \eta}  |\partial_\rho A(\rho)|^p
\,d\rho =1,\label{6.37}\\ 
&\int_0^{\tilde \eta\e_j}  |A_j(\rho)|^q\rho^{-1-q/p'}
\,d\rho=\int_0^{\tilde \eta} | A(\rho)|^q\rho^{-1-q/p'}
\,d\rho.
\label{6.38} \end{align} 
Now we define a sequence of test functions $U_j= A_j(\rho)\cdot B_j(\omega) \in  C^\infty_c((0, \e_j\tilde \eta))\times C^\infty(S^{n-1})$.
If we can show the following properties, then the assertion  follows in a similar way:
\begin{enumerate}
\item
\begin{equation}\label{6.39}
\int_{S^{n-1}} \,dS \int _0^{\tilde \eta\e_j}\left( (\partial_\rho U_j)^2+H(\rho)^2 \frac{(\vLambda U_j)^2}{\rho^2}\right)^{p/2} \,d\rho <\infty,
\end{equation}
\item
\begin{equation}\label{6.40}
 \left( \int_{S^{n-1}} \,dS\int _0^{\tilde \eta\e_j}
 |U_j|^q   \rho^{-1-q/{p'}} \,d\rho\right)^{p/q} \to \infty \quad \mbox{as }\quad j\to\infty.
\end{equation}
\end{enumerate}
Since (\ref{6.40}) follows direct from (\ref{infty}) and  (\ref{6.38}), it suffices to show (\ref{6.39}).
By  the definition  we immediately see that
\begin{equation}\label{6.41}
 \int_0^{\tilde \eta\e_j} |\partial_\rho A_j(\rho)|^p
\,d\rho \int_{S^{n-1} } |B_j(\omega)|^p \,dS= \int_{S^{n-1} } |B_j(\omega)|^p \,dS<\infty. \end{equation} 
By  Hardy's inequality, for some positive number $C$ we have 
\begin{equation}
\int_0^{\tilde \eta \e_j} |\partial_\rho A_j(\rho)|^p
\,d\rho\ge C \int_0^{\tilde R\e_j} |A_j(\rho)|^p  \rho^{-p}\,d\rho, \qquad (j=1,2,3,\ldots ).\label{6.42}
\end{equation}
Then  we have
\begin{equation}
\begin{split}
\int_0^{\tilde \eta\e_j} |A_j(\rho)|^p & \frac{H(\rho)^p}{\rho^p}\,d\rho \int_{S^{n-1}} |\vLambda B_j(\omega)|^p\, dS\\
& \le C^{-1}\int_0^{\tilde \eta\e_j} |\partial_\rho A_j(\rho)|^p
\,d\rho
<\infty\qquad  ( (\ref{6.25}), (\ref{6.42}) )
\end{split}\label{6.43}
\end{equation}
Then we have (\ref{6.39}) as before, hence the assertion is proved.
\end{proof}

\section{Proof of Theorem \ref{Thm3.4}}
\begin{proof}

First we treat the  case that $w\in P(\R_+)$ and $w$ vanishes in infinite order at  the origin.
Namely  we assume that for an arbitrary positive number $m$ there exists a positive $t_m$  such  that $t_m\to 0 $ as $m\to\infty$ and 
\begin{equation} w(t)\le t^m,  \qquad t\in (0,t_m). \label{6.46}
\end{equation}
 Since $\varphi(\rho)\; (\varphi(0)=0)$ is  increasing, 
for $\eta>0$ we  set $\tilde \eta=\varphi^{-1}(\eta)$.
Now we assume  on the contrary that for  some  positive number $C_0$, 
$$H(\rho) \ge C_0, \qquad 0< \rho\le \tilde\eta.$$
Then  $C_0/\rho\le  \varphi'(\rho)/\varphi(\rho)$ holds for $\rho\in (0,\tilde\eta]$, hence by integrating the  both side over an interval $[\rho, \tilde\eta]$ we have 
\begin{equation} 
\varphi(\rho) \le \frac{\varphi(\tilde\eta)}{\tilde\eta^{C_0}} \rho^{C_0}, \qquad \rho \in (0,\tilde\eta). \label{6.47}
\end{equation}

Since $ H(\rho)= \rho\varphi'(\rho)/\varphi(\rho)= w(\varphi(\rho))/(\rho \varphi(\rho))$ holds, by (\ref{6.46}) we  have $ C_0 \le {\varphi(\rho)^{m-1}}/{\rho}$ for sufficiently small $\rho>0$,  more precisely
\begin{equation} (C_0\rho)^{1/(m-1)} \le \varphi(\rho),  \qquad \rho\in (0, \varphi^{-1}(t_m)). \label{6.48}
\end{equation}
Then we have $$ 1\le \frac{\varphi(\tilde\eta)}{C_0^{1/(m-1)}{\tilde\eta}^{C_0}} \rho^{{C_0}-1/(m-1)} 
\qquad \rho\in (0, \varphi^{-1}(t_m)).$$
If $m$ is sufficiently large, then this does not hold, hence
 the assertion is proved by  a contradiction.
\par\medskip
In the next  we  treat the case that $w\in Q(\R_+)$ and $w$ blows up at  the origin in infinite order. Then we assume  that
 for an arbitrary positive number $m$ there exists a positive $t_m$  such  that 
 $t_m\to 0 $ as $m\to\infty$ and
\begin{equation} w(t)\ge t^{-m},  \qquad t\in (0,t_m). \label{6.49}
\end{equation}
As  in the previous step  we  assume  (\ref{6.19}) as  well. 
By  the  definition of $\varphi(\rho)$ and (\ref{6.49}), we  have  
$$\varphi'(\rho)= w(\varphi(\rho))\ge \varphi(\rho)^{-m}, \qquad \rho \in (0, \varphi^{-1}(t_m)).$$ 
By integrating this over an interval $(0,\rho)$ we  have 
\begin{equation}
\varphi(\rho) \ge (m+1)^{1/(m+1)} \rho^{1/(m+1)}, \qquad \rho \in (0, \varphi^{-1}(t_m)).
\end{equation}
Combining  this with (\ref{6.47}) we have 
$$(m+1)^{1/(m+1)}\le \frac{\varphi(\tilde\eta)}{{\tilde\eta}^{C_0}} \rho^{C_0-1/(m+1)}, \qquad \rho \in (0, \varphi^{-1}(t_m)).$$
But this does not hold if $m$ is  sufficiently large, and hence the assertion is proved.
\end{proof}

\section{Appendix; Some relations among the best constants}
In this section we review fundamental properties of the best constants $S^{p,q;\gamma}$, $S^{p,q;\gamma}_{\rm rad}$, $C^{p,q; R}$  and  $C^{p,q; R}_{\rm rad}$. 
Most of the contents are borrowed from \cite{hk3} (See Section 2.2 and Section 2.3).
Let us introduce  some notations.
\begin{df}\label{df2.4}
For
$1<p\le q<\infty{\;\!}$,  we set
\begin{equation}\dstl{
 \gamma_{p\;\!\!,q}^{}=\dfrac{n-{\;\!\!}1}{1{\;\!\!}+q/p'\;\!\!},{\,\,}
 S_{\;\!\!p\;\!\!,q}^{}=\left\{\begin{array}{cl}\dstl{
  ({\:\!}p')^{p\:\!-\:\!2\:\!+\:\!p\;\!\!/\;\!\!q}q^{\:\!p\;\!\!/\;\!\!q}
  \left(\:\!\!\dfrac{\omega_{n}^{}}{\tau_{\;\!\!p\;\!\!,q}^{}\;\!\!}{\:\!}
  {\rm B}\left(\;\!\!\dfrac{1}{p{\;\!}\tau_{\;\!\!p\;\!\!,q}^{}\;\!\!},
  \dfrac{1}{p'\tau_{\;\!\!p\;\!\!,q}^{}\;\!\!}\:\!\!\right)\:\!\!\!
  \right)^{\!1-\:\!p\;\!\!/\;\!\!q}
 } & \mbox{ if }{\:\!}p<q, \smallskip \\ 1 & \mbox{ if }{\:\!}p=q. \end{array}\right.
}
\end{equation}
Here $\tau_{p,q}=1/p-1/q $ and ${{\rm B}(\:\!\cdot\:\!,\:\!\cdot\:\!)}$ is the beta function.
\end{df}
 \begin{prop} ( Non-critical   CKN-type inequalities )\label{classicalCKN}
 \par Assume that $1<p\le q<\infty$,  $\tau_{\;\!\!p\;\!\!,q}^{}\le1/n$  and $\gamma\neq0$. 
 Then  we have  the followings:
 \begin{enumerate}
 \item 
$S_{\rm{rad}}^{p,q;\gamma}\ge S^{p,q;\gamma}>0 $.
\item
$\dstl{
 S^{\:\!p\;\!\!,q\:\!;\:\!\gamma}=S^{\:\!p\;\!\!,q\:\!;\:\!-\:\!\gamma} \text {and } S_{\rm{rad}}^{\:\!p\;\!\!,q\:\!;\:\!\gamma}=S_{\rm{rad}}^{\:\!p\;\!\!,q\:\!;\:\!-\:\!\gamma}.
}$
\item
$\dstl{
 S^{\:\!p\;\!\!,q\:\!;\:\!\gamma}_{\rm rad}
 =S_{\;\!\!p\;\!\!,q}^{}|\gamma{\:\!}|^{p\:\!(1-\:\!\tau_{\;\!\!p\:\!\!,\;\!\!q}^{}\;\!\!)}.
}$
\item
$S^{p,q;\gamma}= S_{\rm rad}^{p,q;\gamma}= S_{p,q}|\gamma|^{p(1-\tau_{p,q})}$ for
$ 0<|\gamma|\le \gamma_{p,q}$. 

\end{enumerate}
 \end{prop}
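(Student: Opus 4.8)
The plan is to reduce all four assertions to a one–dimensional extremal problem on $(0,\infty)$. Assertion (1): the bound $S_{\rm rad}^{p,q;\gamma}\ge S^{p,q;\gamma}$ is immediate from the definitions, the radial infimum being taken over the smaller class $C^\infty_{\rm c}(\R^n\setminus\{0\})_{\rm rad}\setminus\{0\}$, while $S^{p,q;\gamma}>0$ is exactly the CKN inequality of \cite{CKN} (equivalently, in polar coordinates, the weighted one–dimensional Hardy--Sobolev inequality on $(0,\infty)$, which is where the hypothesis $\tau_{p,q}\le 1/n$ is used). For the symmetry (2) I would use the conformal inversion $\iota(x)=x/|x|^2$ and set $v(y)=u(\iota(y))$. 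Since $\iota$ is an involution with $|\iota(y)|=1/|y|$, Jacobian $|y|^{-2n}$, and conformal derivative, one has $|\nabla_x u(x)|=|x|^{-2}|\nabla_y v(\iota(x))|$; substituting into the two integrals defining $E^{p,q;\gamma}[u]$ and collecting powers of $|y|$ gives, after a short computation, $E^{p,q;\gamma}[u]=E^{p,q;-\gamma}[v]$. As $u\mapsto v$ is an involution of $C^\infty_{\rm c}(\R^n\setminus\{0\})$ commuting with rotations, it preserves radiality, so $S^{p,q;\gamma}=S^{p,q;-\gamma}$ and $S_{\rm rad}^{p,q;\gamma}=S_{\rm rad}^{p,q;-\gamma}$ both follow.

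For assertion (3), by (2) we may assume $\gamma>0$. In polar coordinates a radial $u$ corresponds to $f(r)=u(r\omega)$ and
$$E^{p,q;\gamma}[u]=\omega_n^{\,1-p/q}\,\frac{\int_0^\infty|f'(r)|^p\,r^{p(1+\gamma)-1}\,dr}{\bigl(\int_0^\infty|f(r)|^q\,r^{\gamma q-1}\,dr\bigr)^{p/q}}.$$
Putting $r=s^{1/\gamma}$ and $g(s)=f(r)$ turns the numerator into $\gamma^{\,p-1}\int_0^\infty|g'(s)|^p s^{2p-1}\,ds$ and the denominator into $\gamma^{-1}\int_0^\infty|g(s)|^q s^{q-1}\,ds$, so the quotient rescales by $\gamma^{\,p-1+p/q}=\gamma^{\,p(1-\tau_{p,q})}$ and $S_{\rm rad}^{p,q;\gamma}=\gamma^{\,p(1-\tau_{p,q})}\,S_{\rm rad}^{p,q;1}$. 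It then remains to evaluate $S_{\rm rad}^{p,q;1}$, which is the classical Bliss--Talenti minimization: up to dilation the extremals are $f(r)=(1+c\,r^{\delta})^{-m}$ with $\delta,m$ fixed by $p,q$, and computing the resulting integrals by the beta function yields precisely $S_{p,q}$ of Definition \ref{df2.4} (the condition $\tau_{p,q}\le 1/n$ guaranteeing convergence of those integrals, the value at $\tau_{p,q}=1/n$ being read as a limit). This evaluation is carried out in \cite{CKN,hk3}.

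For assertion (4), by (1) it suffices to prove $E^{p,q;\gamma}[u]\ge S_{\rm rad}^{p,q;\gamma}$ for every $u\in C^\infty_{\rm c}(\R^n\setminus\{0\})$, and this is the one genuinely delicate point, since the radial weight $|x|^{\gamma q-n}$ on the right-hand side obstructs a naive spherical rearrangement. My plan is to pass to Emden--Fowler variables $r=e^{-t}$, turning (\ref{1.4}) into a translation-invariant inequality on the cylinder $\R_t\times S^{n-1}$ in which the angular dependence enters only through $|\vLambda u|$; one then weighs the bending energy lost by collapsing a trial function to its $\omega$-independent profile against the $L^q$-geometry of the right-hand side, and finds that this collapse is never disadvantageous exactly when $|\gamma|\,(1+q/p')\le n-1$, i.e. $|\gamma|\le\gamma_{p,q}$ (the number $n-1$ being the first nonzero Laplace--Beltrami eigenvalue on $S^{n-1}$). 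Combined with (1) and (3) this gives $S^{p,q;\gamma}=S_{\rm rad}^{p,q;\gamma}=S_{p,q}|\gamma|^{p(1-\tau_{p,q})}$. The hard part will be making this comparison rigorous in the non-Hilbertian range $p\ne 2$, where one cannot simply decompose into spherical harmonics; here I would follow the symmetrization and one-dimensional reduction argument of \cite{hk3} (cf. also \cite{ho2}).
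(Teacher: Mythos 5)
Assertions (1)--(3) of your proposal are correct and essentially coincide with the paper's route (via \cite{hk3}): (2) is proved there by exactly your inversion $x\mapsto x/|x|^2$, and (3) by a power change of variables ($T_\beta u(y)=u(|y|^{\beta-1}y)$ rather than your radial substitution $r=s^{1/\gamma}$, but to the same effect) reducing to a single one-dimensional Bliss--Talenti problem whose extremal $(a+b\,r^{p'})^{-p/(q-p)}$ is evaluated by beta functions; (1) then follows. The genuine gap is in assertion (4). Your Emden--Fowler/cylinder argument --- ``the collapse to the $\omega$-independent profile is never disadvantageous exactly when $|\gamma|(1+q/p')\le n-1$'' --- is a heuristic, not a proof: the identification of $n-1$ with the first Laplace--Beltrami eigenvalue is a second-variation ($p=2$) story, it at best addresses local stability of the radial extremal rather than the global inequality $E^{p,q;\gamma}[u]\ge S^{p,q;\gamma}_{\rm rad}$, and you concede yourself that it does not survive the passage to $p\neq 2$. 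Falling back on ``follow \cite{hk3}'' leaves the one delicate assertion unproved.

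What the paper actually does for (4) is structurally different and worth recording. The threshold $\gamma_{p,q}=(n-1)/(1+q/p')$ arises not from an eigenvalue but from an algebraic matching of the two weights: one checks that $|x|^{p(1+\gamma)-n}=\bigl(|x|^{q\gamma-n}\bigr)^{1-p}$ holds precisely when $\gamma=\gamma_{p,q}$, and that $0<q\gamma_{p,q}<n$, so $f=|x|^{q\gamma_{p,q}-n}$ is an admissible (radially decreasing, locally integrable) density. One then rearranges $u$ with respect to the measure $f\,dx$: equimeasurability preserves $\int|u|^q f\,dx$ exactly, while a weighted P\'olya--Szeg\H{o} inequality (Proposition \ref{classicalCKN} rests on this, proved in \cite{hk3} via the co-area formula and an isoperimetric comparison) shows $\int|\nabla u|^p f^{1-p}dx$ does not increase. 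This yields $S^{p,q;\gamma_{p,q}}=S^{p,q;\gamma_{p,q}}_{\rm rad}$ at the endpoint, and the same power map $T_\beta$ used in (3) transports the identity $S^{p,q;\gamma}=S^{p,q;\gamma}_{\rm rad}$ monotonically down to all $0<|\gamma|\le\gamma_{p,q}$. If you want a self-contained proof of (4), this weight-matching rearrangement is the idea your proposal is missing.
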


\begin{df}\label{df2.6}
For $1<p\le q<\infty{\;\!}$
we set
\begin{equation}
 R_{\:\!p\;\!\!,q}^{}=\exp\dfrac{1{\;\!\!}+q/p'}{\;\!\!(n-{\;\!\!}1){\:\!}p'}{\,\,\,}
 \mbox{ if }{\:\!}n\ge 2, \quad{\,\,\,\,}C_{\;\!\!p\;\!\!,q}^{}
 =S_{\;\!\!p\;\!\!,q}
 {\;\!\!({\:\!}p')^{p\:\!(\tau_{\;\!\!p\;\!\!,q}\;\!\!-1)}}.
\end{equation}
\end{df}

 \begin{prop} ( Critical   CKN-type inequalities )\label{classicalCKN2}
 Assume that
$1<p\le q<\infty$, $\tau_{\;\!\!p\;\!\!,q}^{}\le1/n$  and  $R>1{\;\!}$. Then, we have:
\begin{enumerate}
\item
${\;\!}\dstl{
 C^{\:\!p\;\!\!,q\:\!;R}_{\rm rad}
}\ge\dstl{
 C^{\:\!p\;\!\!,q\:\!;R}
}>0$
\item
$\dstl{
 C^{\:\!p\;\!\!,q\:\!;R}_{\rm rad}
 =C_{\;\!\!p\;\!\!,q}^{} \hspace{3.46cm} \mbox{ for }R\ge 1.
}$
\item
$\dstl{
 C^{\:\!p\;\!\!,q\:\!;R}=C^{\:\!p\;\!\!,q\:\!;R}_{\rm rad}
 =C_{\;\!\!p\;\!\!,q}^{} \hspace{0.58cm} 
\mbox{ for }R\ge
 R_{\:\!p\;\!\!,q}^{}  \mbox{ if }{\:\!}p\ge n\ge 2.
}$

\end{enumerate}
 \end{prop}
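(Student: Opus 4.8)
The plan is to reduce all three assertions to the non-critical theory of Proposition~\ref{classicalCKN} by means of the logarithmic substitution that turns the critical log-weight into a pure power weight. Given $u\in C_{\rm c}^\infty(B_\eta\setminus\{0\})$, I would write $x=r\omega$ ($r=|x|,\ \omega\in S^{n-1}$), set $\tilde\eta=1/\log R$, and define $U$ on $B_{\tilde\eta}$ by $r=R\eta\,e^{-1/\rho}$, $U(\rho\omega)=u(R\eta\,e^{-1/\rho}\omega)$; then $U\in C_{\rm c}^\infty(B_{\tilde\eta}\setminus\{0\})$. This is exactly the $w(t)=t$, $\mu=\log R$ instance of the change of variables carried out in Section~4, and the same computation gives
\[
\int_{B_\eta}|\nabla u|^p|x|^{p-n}\,dx=\int_{B_{\tilde\eta}}\Big((\partial_\rho U)^2+\frac{(\vLambda U)^2}{\rho^{4}}\Big)^{p/2}\rho^{\,2p-1-n}\,dy ,\qquad
\int_{B_\eta}\frac{|u|^q\,dx}{|x|^n\big(\log(R\eta/|x|)\big)^{1+q/p'}}=\int_{B_{\tilde\eta}}|U|^q\rho^{\,q/p'-n}\,dy .
\]
Since $2p-1-n=p(1+1/p')-n$, the right-hand weights are precisely those of the non-critical functional $E^{p,q;1/p'}$ on $B_{\tilde\eta}$, with the single difference that the angular coefficient $\rho^{-4}$ exceeds the non-critical one $\rho^{-2}$ whenever $\rho<1$.

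This already settles (1): $C^{p,q;R}_{\rm rad}\ge C^{p,q;R}$ because radial $u$ form a subclass of competitors, and since $\rho^{-4}\ge\tilde\eta^{-2}\rho^{-2}$ on $B_{\tilde\eta}$ the two displayed identities give $F^{p,q;R}[u]\ge\min(\tilde\eta^{-p},1)\,S^{p,q;1/p'}(\R^n)>0$ by Proposition~\ref{classicalCKN}(1) and the domain-invariance of $S^{p,q;\gamma}$. For (2): when $u$ is radial $\vLambda U\equiv0$, so $F^{p,q;R}[u]=\omega_n^{\,1-p/q}E^{p,q;1/p'}[U]$ with $U$ radial on $B_{\tilde\eta}$; by the dilation invariance of $E^{p,q;\gamma}$ the infimum over $C_{\rm c}^\infty((0,\tilde\eta))$ equals that over $C_{\rm c}^\infty((0,\infty))$, whence $C^{p,q;R}_{\rm rad}=\omega_n^{\,1-p/q}S^{p,q;1/p'}((0,\infty))=S^{p,q;1/p'}_{\rm rad}(\R^n)$ (the last equality being just the radial reduction of $E^{p,q;1/p'}$ on $\R^n$), which is $R$-independent (here $R>1$, i.e. $\tilde\eta<\infty$, is used), and by Proposition~\ref{classicalCKN}(3) together with $C_{p,q}=S_{p,q}\,(p')^{p(\tau_{p,q}-1)}=S_{p,q}\,|1/p'|^{\,p(1-\tau_{p,q})}$ it equals $C_{p,q}$. (One may instead route the middle identity through Lemma~\ref{Lemma2.1}, keeping track of $\omega_1=2$.)

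For (3) I would assume $p\ge n\ge2$ and $R\ge R_{p,q}$. Since $p\ge n$ forces $(n-1)p'\le n\le 1+q/p'$, one gets $\tilde\eta=1/\log R\le p'\gamma_{p,q}\le1$, so $\rho^{-4}\ge\rho^{-2}$ on $B_{\tilde\eta}$ and the first identity dominates $\int_{B_{\tilde\eta}}|\nabla U|^p\rho^{\,p(1+1/p')-n}\,dy\ge S^{p,q;1/p'}(\R^n)\big(\int_{B_{\tilde\eta}}|U|^q\rho^{\,q/p'-n}\,dy\big)^{p/q}$; hence $C^{p,q;R}\ge S^{p,q;1/p'}(\R^n)$. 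Combining with (1) and (2) produces the chain $S^{p,q;1/p'}(\R^n)\le C^{p,q;R}\le C^{p,q;R}_{\rm rad}=S^{p,q;1/p'}_{\rm rad}(\R^n)$, so everything collapses to $C_{p,q}$ as soon as symmetry breaking at $\gamma=1/p'$ is excluded, i.e. $S^{p,q;1/p'}(\R^n)=S^{p,q;1/p'}_{\rm rad}(\R^n)$.

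The hard part will be exactly that last step: the absence of symmetry breaking at the exponent $\gamma=1/p'$ in the regime $p\ge n$ (equivalently, radiality of the constrained minimizer of the transformed functional on the short ball $B_{\tilde\eta}$, whose angular weight $\rho^{-4}$ is large precisely because $R\ge R_{p,q}$). When $p=q$ it is trivial, the sharp constant being the dimension-free Hardy value $(1/p')^p$. For $p<q$ it rests on the known symmetry results for weighted Sobolev inequalities with power weight of exponent $1/p'$ when $p\ge n$, proved via the Emden--Fowler substitution on the cylinder $\R\times S^{n-1}$ together with delicate comparison estimates (see \cite{hk3} and the references therein); verifying that the exact threshold is $R_{p,q}$, equivalently $\tilde\eta\le p'\gamma_{p,q}$, is part of that analysis. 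Everything else is bookkeeping of the change of variables and of the explicit Bliss/Beta-function value of $S^{p,q;1/p'}_{\rm rad}$ recorded in Proposition~\ref{classicalCKN}(3).
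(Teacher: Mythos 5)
Your reduction via $r=R\eta\,e^{-1/\rho}$ is exactly the transformation the paper uses (the map $\tilde T_R$ of \cite{hk3}, equivalently the $w(t)=t$, $\mu=\log R$ instance of Section 4), and parts (1) and (2) of your argument are sound: the pointwise comparison $\rho^{-4}\ge \min(\tilde\eta^{-2},1)\,\rho^{-2}$ on $B_{\tilde\eta}$ yields positivity of $C^{p,q;R}$ from the non-critical inequality at $\gamma=1/p'$ (this is in fact simpler than the nonlinear-potential-theoretic route taken in \cite{hk3} for $1<p<n$), and for radial competitors the angular term vanishes, giving $C^{p,q;R}_{\rm rad}=S^{p,q;1/p'}_{\rm rad}=C_{p,q}$ by dilation invariance and the explicit radial constant. (Minor bookkeeping: if you keep everything as $n$-dimensional integrals over $B_{\tilde\eta}$ no factor $\omega_n^{1-p/q}$ appears, and "$S^{p,q;1/p'}((0,\infty))$" should be the radial constant on $\R^n$.)

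The genuine gap is in (3). Your squeeze $S^{p,q;1/p'}\le C^{p,q;R}\le C^{p,q;R}_{\rm rad}=S^{p,q;1/p'}_{\rm rad}$ closes only if $S^{p,q;1/p'}=S^{p,q;1/p'}_{\rm rad}$, and in the regime of (3) this is precisely what you cannot have in general: $p\ge n\ge 2$ and $q\ge p$ force $(n-1)p'\le n\le 1+q/p'$, hence $1/p'\ge\gamma_{p,q}$ with strict inequality unless $p=q=n$. The only symmetry result available (assertion (4) of Proposition \ref{classicalCKN}) requires $|\gamma|\le\gamma_{p,q}$ and is therefore inapplicable, and for $|\gamma|>\gamma_{p,q}$ symmetry breaking $S^{p,q;\gamma}<S^{p,q;\gamma}_{\rm rad}$ actually occurs (e.g.\ $n=p=2$ and $q$ large, where $\gamma_{2,q}\to 0$ while $1/p'=1/2$). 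So the identity your argument rests on is false in general; the extra angular weight $\rho^{-4}>\rho^{-2}$ produced by the substitution is exactly what rescues the critical problem even though the non-critical problem at $\gamma=1/p'$ breaks symmetry, and no Emden--Fowler symmetry theorem will supply the missing step. The paper's proof of (3) does not pass through the non-critical problem at all: it is a generalized rearrangement performed directly on the critical functional, rearranging with respect to the weight $f(x)=|x|^{(n-1)p'-n}$ (admissible, i.e.\ nonincreasing, precisely because $p\ge n$ gives $(n-1)p'\le n$), after factoring the right-hand weight as $\bigl[|x|^{-(n-1)p'}(\log(R/|x|))^{-1-q/p'}\bigr]\cdot f(x)$. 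The hypothesis $R\ge R_{p,q}$ is exactly the condition making the bracketed factor nonincreasing so that the Hardy--Littlewood-type inequality applies, and $f^{-(p-1)}=|x|^{p-n}$ recovers the gradient weight via the P\'olya--Szeg\H{o}-type inequality for ${\cal R}_f$. You would need to supply this rearrangement argument (or an equivalent) to establish (3).
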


\par\medskip

From Proposition \ref{classicalCKN} and Proposition \ref{classicalCKN2}, noting  that $S_{p,q} {(p')^{p(\tau_{\;\!\!p\;\!\!,q}-1)}}= C_{p,q}$ by Definition \ref{df2.6}, we have an interesting relation among the  best constans:

\begin{prop}\label{relation}\begin{enumerate}
\item
Assume that $n\ge 1$, $1<p\le q<\infty$,  $\tau_{\;\!\!p\;\!\!,q}^{}\le1/n$ and $R\ge 1$. Then
\begin{equation}S_{\rm rad}^{p,q; 1/{p'}}=\frac{ S_{p,q}}{|p'|^{p(1-\tau_{p,q})}}= C_{p,q}=  C^{\:\!p\;\!\!,q\:\!;R}_{\rm rad}
\end{equation}
\item
 If  $n\ge 2$, $1<p\le q<\infty$, $\tau_{p,q}\le 1/n$, $1/p'\le \gamma_{p,q}$  and  $ R\ge R_{p,q}$, then we  have the  relation 
\begin{equation}
S^{p,q;  1/p'}=S^{p,q;  1/p'}_{\rm rad}=
C^{\:\!p\;\!\!,q\:\!;R}_{\rm rad}
=
C^{\:\!p\;\!\!,q\:\!;R}.
\end{equation}
\end{enumerate}
\end{prop}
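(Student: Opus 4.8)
The plan is to obtain both assertions by an entirely formal combination of the Appendix facts --- Proposition \ref{classicalCKN}, Proposition \ref{classicalCKN2} and Definition \ref{df2.6} --- together with one numerical observation, namely that the weight $w(t)=t^{p'\gamma+1}$ taken at $\gamma=1/p'$ is exactly what makes the two families of best constants coincide. No new inequality is needed beyond those recorded there.

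\textbf{Part (1).} First I would specialise the explicit formula of Proposition \ref{classicalCKN}(3) to $\gamma=1/p'$. Since $1/p'>0$, it gives
\[
S_{\rm rad}^{p,q;1/p'}=S_{p,q}\Bigl(\tfrac{1}{p'}\Bigr)^{p(1-\tau_{p,q})}=\frac{S_{p,q}}{(p')^{p(1-\tau_{p,q})}}.
\]
Then, because $(p')^{-p(1-\tau_{p,q})}=(p')^{p(\tau_{p,q}-1)}$, the right-hand side is precisely $S_{p,q}(p')^{p(\tau_{p,q}-1)}=C_{p,q}$ by Definition \ref{df2.6}. Finally Proposition \ref{classicalCKN2}(2), which holds for every $R\ge 1$, gives $C_{\rm rad}^{p,q;R}=C_{p,q}$. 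Chaining the three identities yields (1); here $\tau_{p,q}\le 1/n$ enters only to guarantee that the Appendix quantities are defined.

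\textbf{Part (2).} Building on (1): the equality $S^{p,q;1/p'}=S_{\rm rad}^{p,q;1/p'}$ is Proposition \ref{classicalCKN}(4) applied at $\gamma=1/p'$, an application legitimised precisely by the hypothesis $1/p'\le\gamma_{p,q}$, which places $\gamma=1/p'$ in the range $0<|\gamma|\le\gamma_{p,q}$ where that proposition asserts radial optimality. The middle identity $S_{\rm rad}^{p,q;1/p'}=C_{\rm rad}^{p,q;R}$ is part (1) (note $R\ge R_{p,q}>1$). It remains to identify $C_{\rm rad}^{p,q;R}$ with $C^{p,q;R}$: the bound $C_{\rm rad}^{p,q;R}\ge C^{p,q;R}$ always holds by Proposition \ref{classicalCKN2}(1), and the reverse is supplied by Proposition \ref{classicalCKN2}(3), whose hypothesis $R\ge R_{p,q}$ is exactly the remaining standing assumption. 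Assembling the three links gives the four-fold identity.

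\textbf{The main obstacle.} Every step above is bookkeeping with the explicit constants $S_{p,q}$, $C_{p,q}$, $\gamma_{p,q}$, $R_{p,q}$, $\tau_{p,q}$, except the last one, $C^{p,q;R}=C_{\rm rad}^{p,q;R}$, which says that for $R$ large the critical CKN Rayleigh quotient is minimised (in the limit) by radial profiles, equivalently $C^{p,q;R}=C_{p,q}$; I expect this to be the crux. For $p\ge n$ it is exactly Proposition \ref{classicalCKN2}(3); in the general range of $p$ allowed by $1/p'\le\gamma_{p,q}$ one invokes the corresponding radial-symmetry result of \cite{hk3}. An elementary route covering at least $R\ge e$ is to run the change of variables $r=\varphi(\rho)$ with $w(t)=t$ of Section 4: it is an isometry on radial functions and, as then $H(\rho)=1/\rho\ge 1$ on $(0,\varphi^{-1}(\eta))$, it converts the critical functional into one dominating the non-critical functional at $\gamma=1/p'$, whose infimum over $B_{\varphi^{-1}(\eta)}$ is $S^{p,q;1/p'}$; this yields $C^{p,q;R}\ge S^{p,q;1/p'}$, and with the trivial bound $C^{p,q;R}\le C_{\rm rad}^{p,q;R}$ the loop closes.
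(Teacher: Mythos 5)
Your derivation is, for Part (1) and for three of the four links in Part (2), exactly the paper's own argument: the paper proves Proposition \ref{relation} by nothing more than chaining Proposition \ref{classicalCKN}(3)--(4), Proposition \ref{classicalCKN2}(1)--(3) and the identity $C_{p,q}=S_{p,q}(p')^{p(\tau_{p,q}-1)}$ from Definition \ref{df2.6}, and your bookkeeping for $S_{\rm rad}^{p,q;1/p'}=S_{p,q}/(p')^{p(1-\tau_{p,q})}=C_{p,q}=C^{p,q;R}_{\rm rad}$ and for $S^{p,q;1/p'}=S_{\rm rad}^{p,q;1/p'}$ under $1/p'\le\gamma_{p,q}$ is correct.

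You have also put your finger on the one place where the chain is not pure bookkeeping, and here a genuine gap remains. Proposition \ref{classicalCKN2}(3) asserts $C^{p,q;R}=C^{p,q;R}_{\rm rad}$ for $R\ge R_{p,q}$ only under the additional hypothesis $p\ge n\ge 2$, which is \emph{not} among the hypotheses of Proposition \ref{relation}(2) (e.g.\ $n=3$, $p=2$ satisfies $1/p'\le\gamma_{p,q}$ for every admissible $q$ but not $p\ge n$). Your fallback via the change of variables $r=\varphi(\rho)$ with $w(t)=t$ is sound as far as it goes: one gets $H(\rho)=1/\rho\ge\mu$ on $(0,1/\mu)$ with $R=e^{\mu}$, hence $C^{p,q;R}\ge\min(\mu^p,1)\,S^{p,q;1/p'}$, which closes the loop only when $\mu\ge 1$, i.e.\ $R\ge e$. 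But the hypothesis $1/p'\le\gamma_{p,q}$ is equivalent to $R_{p,q}\le e$, so the statement claims the identity on the whole interval $R\in[R_{p,q},e)$ as well, and there your argument only yields $C^{p,q;R}\ge\mu^p S^{p,q;1/p'}$ with $\mu^p<1$ (monotonicity of $C^{p,q;R}$ in $R$ runs the wrong way to help). So for $1<p<n$ and $R_{p,q}\le R<e$ the final equality $C^{p,q;R}=C^{p,q;R}_{\rm rad}$ is not established by the facts recorded in the Appendix; to be fair, the paper's own one-line citation of Propositions \ref{classicalCKN} and \ref{classicalCKN2} has the same defect, so you have located a weakness of the source rather than introduced one.
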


\bigskip\noindent
{\large\bf Toshio Horiuchi\\Department of Mathematics\\Faculty of Science \\ Ibaraki University\\
Mito, Ibaraki, 310, Japan}\par\noindent
e-mail: toshio.horiuchi.math@vc.ibaraki.ac.jp
\par\noindent

\end{document}